\documentclass[11pt]{amsart}
\usepackage{amssymb,amsmath,amsthm, framed,graphicx,latexsym,epsfig}
\usepackage{stmaryrd}
\usepackage{enumerate, lineno}

\theoremstyle{plain} 
\numberwithin{equation}{section}
\newtheorem{theorem}{Theorem}[section]
\newtheorem{lemma}[theorem]{Lemma}
\newtheorem{corollary}[theorem]{Corollary}
\newtheorem{proposition}[theorem]{Proposition}

\theoremstyle{definition} 

\def\tr{\Delta}
\def\a{\alpha}
\def\bea{\begin{eqnarray*}}
\def\eea{\end{eqnarray*}}
\def\be{\begin{equation}}
\def\ee{\end{equation}}
\begin{document}

\title{Bach-flat $h$-Almost gradient Ricci solitons }

\author{Gabjin Yun}
\address{Department of Mathematics, Myong Ji University, San 38-2 Namdong, Yongjin, Gyeonggi, Korea}
\email{gabjin@mju.ac.kr}

\author{Jinseok Co}
\address{Department of Mathematics, Chung-Ang University, 84 Heukseok-ro, Dongjak-gu, Seoul, Korea }
\email{co1010@hanmail.net}

\author{Seungsu Hwang} 
\address{Department of Mathematics, Chung-Ang University, 84 Heukseok-ro, Dongjak-gu, Seoul, Korea }
\email{seungsu@cau.ac.kr}
\thanks{The first author was supported by the Basic Science Research Program
through the National Research Foundation of Korea (NRF) funded by the
Ministry of Education, Science and Technology (2011-0007465). The
third and corresponding author was supported by  the Ministry of Education (NRF-2015R1D1A1A01057661).}

\subjclass{Primary 53C25, 58E11}
\keywords{$h$-almost gradient Ricci soliton, Bach-flat,  Einstein metric}

\begin{abstract}
On an $n$-dimensional complete manifold $M$, consider an $h$-almost gradient Ricci soliton, which is a generalization of a gradient Ricci soliton. We prove that if the manifold is Bach-flat and $dh/du>0$, then the manifold $M$ is either Einstein or rigid. In particular, such a manifold has harmonic Weyl curvature. Moreover, if the dimension of $M$ is four, the metric $g$ is locally conformally flat.
\end{abstract}
 
\maketitle

\section{Introduction}
The notion of an $h$-almost Ricci soliton was introduced by Gomes, Wang, and Xia \cite{almost}. Such a soliton is a generalization of an almost Ricci soliton presented in \cite{Barros} and \cite{psrr}. 
An $h$-almost Ricci soliton is a complete Riemannian manifold $(M^n,g)$ with a vector field $X$ on $M$, a soliton function $\lambda :M\to{\mathbb R}$ and a signal function $h:M\to {\mathbb R}^+$ satisfying the equation
$$ r_g+\frac h2 \, {\mathcal L}_Xg=\lambda \, g,$$
where $r_g$ is the Ricci curvature of $g$. 
A function is called signal if it has only one sign; in other words, it is  either positive or negative on $M$. 
Let $(M,g,X,h,\lambda)$ denote an $h$-almost Ricci soliton.
In particular, $(M,g,\nabla u, h,\lambda)$ for some smooth function $u:M\to{\mathbb R}$ is called an $h$-almost gradient Ricci soliton with potential function $u$. In this case, we have
\be r_g+h\, D_gdu =\lambda \, g.\label{basic}\ee
Here, $D_gdu$ denotes the Hessian of $u$.
Note that if we take $u=e^{-\frac fm}$ and $h=-\frac mu$, 
then (\ref{basic}) becomes
$${\rm Ric}_f^m= r_g+D_gdf -\frac 1m df\otimes df =\lambda \, g.$$ 
In other words, the $(\lambda, n+m)$-Einstein equation is a special case of (\ref{basic}). Here, ${\rm Ric}_f^m$ is called the $m$-Bakry-Emery tensor.
For further details of $h$-almost Ricci solitons, we refer to \cite{almost}. 

In this paper we consider Bach-flat $h$-almost gradient Ricci solitons. The Bach tensor was introduced by R. Bach and this notion plays an important role in conformal relativity. On any $n$-dimensional Riemannian manifold $(M,g)$, $n\geq 4$, the Bach tensor is defined by 
$$ B= \frac 1{n-3} \, \delta^D \delta {\mathcal W}+\frac 1{n-2}\, \mathring{\mathcal W}z, $$
where ${\mathcal W}$ is the Weyl tensor, $z$ is the traceless Ricci tensor, and 
$\mathring{\mathcal W}z$ is defined by
$$ \mathring{\mathcal W}z (X,Y)=\sum_{i=1}^nz({\mathcal W}(X, E_i)Y, E_i)$$
for some orthonormal basis $\{E_i\}_{i=1}^n$. It is easy to see that if $(M,g)$ is either locally conformally flat or Einstein, then it is Bach-flat: $B=0$. When $n=4$, it is well  known that Bach-flat metrics on a compact manifold $M$ are critical points of the following functional 
$$ g\mapsto \int_M |{\mathcal W}|^2 dv_g.$$
It is clear that when $h=1$ and $\lambda $ is a positive constant, an $h$-almost gradient Ricci soliton reduces to a gradient shrinking Ricci soliton. Cao and Chen proved that a complete Bach-flat gradient shrinking Ricci soliton  is either Einstein or rigid \cite{bet2}. On the other hand, Qing and Yuan classified Bach-flat static spaces \cite{QY}.

Our main result is as follows, which can be considered as a generalization of \cite{bet2}.
\begin{theorem}\label{thm301}  
Let $(M, g, \nabla u, h, \lambda)$ be an $n$-dimensional Bach-flat $h$-almost gradient Ricci soliton with potential function $u$. Assume that each level set of $u$ is compact and $h$ is a function of $u$ only. 
Then, $(M,g, \nabla u, h, \lambda)$ is either
\begin{enumerate}
\item Einstein with constant functions $u$ and $h$, or
\item locally isometric to a warped product with $(n-1)$-dimensional Einstein fibers if $\frac {dh}{du}>0$ on $M$.
\end{enumerate}
\end{theorem}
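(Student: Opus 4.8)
The plan is to follow the strategy of Cao--Chen for Bach-flat gradient shrinking Ricci solitons, adapted to the presence of the signal function $h=h(u)$, and to carry it out in three stages: structure equations, a Bach-flatness argument forcing a three-tensor to vanish, and a rigidity/warped-product conclusion.

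\emph{Step 1 (Structure equations).} If $u$ is constant then (\ref{basic}) reads $r_g=\lambda g$ and $h$ is constant, which is case (1); so assume $u$ non-constant and work on $\Omega=\{\nabla u\neq 0\}$. Tracing (\ref{basic}) gives $h\,\Delta u=n\lambda-R$; subtracting the trace part shows that the traceless Hessian of $u$ satisfies $\mathring{D_gdu}=-\tfrac1h z$, where $z=r_g-\tfrac Rn g$; and contracting (\ref{basic}) with $\nabla u$ gives $\tfrac h2\nabla|\nabla u|^2=\lambda\nabla u-r_g(\nabla u,\cdot)$. Differentiating (\ref{basic}), using $\nabla h=h'\nabla u$ with $h'=dh/du$, the commutation formula $\nabla_i\nabla_j\nabla_k u-\nabla_j\nabla_i\nabla_k u=-R_{ijkl}\nabla^l u$, and the contracted second Bianchi identity, one obtains an expression for the Cotton tensor $C$ of $g$ in terms of $\iota_{\nabla u}\mathcal W$, $z$, $h'$ and $\nabla u$; schematically $h\,C_{ijk}=\mathcal W_{ijkl}\nabla^l u+(\text{terms in }h',z,g\otimes\nabla u)$.

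\emph{Step 2 (Bach-flatness forces a three-tensor to vanish).} Expand the Bach tensor through the standard identity relating $\delta^D\delta\mathcal W$ to $\mathrm{div}\,C$ and $\mathring{\mathcal W}z$, and substitute the Cotton formula of Step 1 together with $\mathring{D_gdu}=-\tfrac1h z$. Contract the identity $B=0$ with $z$ and with $\nabla u\otimes\nabla u$, and integrate by parts over the region $\{a\le u\le b\}$ between two regular level sets: because each level set is compact, the only boundary contributions are fluxes through $\{u=a\}$ and $\{u=b\}$, which by the structure equations are functions of $a$ and $b$ alone, and letting $a,b$ vary over regular values forces these fluxes to cancel. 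What remains is $\int(\text{definite quantity})=0$; this is exactly where the hypothesis $dh/du>0$ is used, to make the integrand a nonnegative multiple of $|D|^2$ for the natural Cao--Chen-type three-tensor $D$ built from $C$, $z$ and $\nabla u$. Hence $D\equiv 0$ on $\Omega$; equivalently $\iota_{\nabla u}\mathcal W=0$ and $C\equiv 0$, so $(M,g)$ has harmonic Weyl curvature.

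\emph{Step 3 (Rigidity and the four-dimensional case).} From $\iota_{\nabla u}\mathcal W=0$, $C\equiv 0$ and the structure equations one deduces that on $\Omega$ the vector $\nabla u$ is an eigenvector of $r_g$ and that $|\nabla u|^2$, $R$ and $\lambda$ are constant on connected components of regular level sets, hence are functions of $u$ only. The classical de Rham / warped-product argument then shows that near a regular point $g=dt^2+\varphi(t)^2 g_N$ with $\dim N=n-1$, and harmonic Weyl curvature of $g$ forces $(N,g_N)$ to be Einstein, which is conclusion (2). When $n=4$, $(N^3,g_N)$ is Einstein, hence a space form, so the warped product $g$ is locally conformally flat, and since $C\equiv 0$ on all of $M$ this extends across the critical set of $u$. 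The main obstacle is Step 2: pinning down the correct divergence/integral identity and showing the boundary fluxes through the compact level sets cancel — in the gradient-soliton case completeness plus the weight $e^{-f}$ does this automatically, whereas here one argues level set by level set — and, above all, organizing the many $h'$-terms so that $dh/du>0$ yields a genuinely signed integrand; everything after $D\equiv 0$ is a by-now-standard rigidity argument.
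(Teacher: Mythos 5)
Your proposal follows essentially the same route as the paper: the Cotton-tensor identity $C+h\,\tilde{i}_{\nabla u}\mathcal{W}=h\,D+H$, a divergence/integration argument over regions between compact level sets showing that Bach-flatness together with $dh/du>0$ forces the signed integrand $\tfrac{n-2}{2}|D|^2/h+\tfrac12|H|^2/(h\,\tfrac{dh}{du})$ to vanish (so $D=H=0$), and then the standard level-set rigidity yielding the warped product with Einstein fibers. The only imprecision is in your Step 2: $D=H=0$ directly yields only $C(\cdot,\nabla u,\cdot)=0$ and $\mathcal{W}(\cdot,\nabla u,\cdot,\nabla u)=0$ (the full vanishing $C\equiv 0$, i.e.\ harmonic Weyl curvature, is obtained only after the warped-product structure via the Codazzi equation), but since these radial vanishings are exactly what your Step 3 uses, the argument goes through as in the paper.
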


For example, when $m>0$, $h=-\frac mu <0$ satisfies the condition of Theorem~\ref{thm301}, since
$$\frac{dh}{du}=\frac m{u^2}>0.$$
This recovers the result of \cite{bet1}. It will be interesting if one can weaken the condition of Theorem~\ref{thm301}.

In the case of (2) in Theorem~\ref{thm301}, a warped product metric has vanishing Cotten tensor (see (\ref{eqn31}) below) since its fiber is Einstein. Thus, 
as a consequence of Theorem~\ref{thm301}, we have the following.

\begin{corollary}\label{thm302}  
Let $(M, g, \nabla u, h, \lambda)$ be an $n$-dimensional Bach-flat $h$-almost gradient Ricci soliton with potential function $u$.
Assume that each level set of $u$ is compact and $h$ is a function of $u$ only. 
If $\frac {dh}{du}>0$ on $M$, then $(M,g)$ has harmonic Weyl curvature. 
\end{corollary}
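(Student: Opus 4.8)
The plan is to reduce Corollary~\ref{thm302} to Theorem~\ref{thm301} by checking that, in each of the two alternatives, the Weyl tensor is harmonic, i.e. $\delta\mathcal{W}=0$ (equivalently the Cotton tensor vanishes when $n=4$, and more generally $\delta\mathcal W$ vanishes by the second Bianchi identity). In case~(1), the manifold is Einstein, so the traceless Ricci tensor $z$ vanishes identically; since $\delta\mathcal W$ is, up to a dimensional constant, the divergence of the Cotton tensor, and the Cotton tensor is built from covariant derivatives of $z$ (it measures the failure of the Schouten tensor to be a Codazzi tensor), an Einstein metric automatically has $\delta\mathcal W=0$, hence harmonic Weyl curvature. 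This case is immediate.

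The substantive case is~(2), where $dh/du>0$ forces $(M,g)$ to be locally isometric to a warped product $I\times_{\varphi} N^{n-1}$ with $(N,g_N)$ Einstein. Here I would invoke the formula for the Cotton tensor of a warped product referenced as~(\ref{eqn31}) in the excerpt: for a warped product over a one-dimensional base with Einstein fiber, the Cotton tensor vanishes. The mechanism is that the only potential contributions to the Cotton tensor come from (a) the variation of the warping function along the base, which produces terms proportional to the fiber metric and is ``Codazzi'' by the radial symmetry, and (b) the intrinsic Cotton tensor of the fiber, which vanishes because an Einstein manifold has harmonic Weyl curvature (trivially, since its Schouten tensor is a constant multiple of the metric, hence Codazzi). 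Combining these, the Schouten tensor of the warped product is a Codazzi tensor, so its Cotton tensor is zero, and therefore $\delta\mathcal W=0$ on $M$; since harmonicity of the Weyl tensor is a local condition, the ``locally isometric'' conclusion of Theorem~\ref{thm301} suffices.

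Concretely, the steps are: first, recall the definition of the Cotton tensor $C_{ijk}=\nabla_i A_{jk}-\nabla_j A_{ik}$ where $A$ is the Schouten tensor, and the classical identity $\delta\mathcal W = -\frac{n-3}{n-2}\,C$ relating it to $\delta\mathcal W$; second, dispatch case~(1) by noting $A=\frac{\mathrm{scal}}{2n(n-1)}g$ is parallel for an Einstein metric, so $C=0$; third, in case~(2) set up warped-product coordinates, compute (or cite~(\ref{eqn31}) for) the Schouten tensor in terms of $\varphi$, $\varphi'$, $\varphi''$ and the Schouten tensor $A_N$ of the fiber, and verify $C=0$ using that $A_N$ is a constant multiple of $g_N$; fourth, conclude $\delta\mathcal W=0$, i.e. harmonic Weyl curvature, in both cases and hence on all of $M$. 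The main obstacle, and the only place requiring genuine care, is the warped-product computation in the third step: one must handle the mixed base–fiber components of $\nabla A$ correctly and confirm that the terms involving $\varphi$-derivatives organize into a symmetric ``Codazzi'' pattern so that the antisymmetrization defining $C$ kills them; fortunately this is exactly the content already packaged in equation~(\ref{eqn31}), so in the write-up it reduces to a citation plus the elementary observation that the fiber, being Einstein, contributes nothing.
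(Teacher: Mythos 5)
Your proposal is correct in substance, but it follows a different route from the proof the paper actually writes down. You deduce the corollary from Theorem~\ref{thm301}: case (1) is trivial, and in case (2) you argue that a warped product $dt^2+\psi^2(t)\hat g_E$ over an interval with Einstein fiber has vanishing Cotton tensor. That fact is true (and is exactly what the authors gesture at in the remark preceding the corollary), but be aware that equation (\ref{eqn31}) is merely the \emph{definition} $C=d^Dr-\frac1{2(n-1)}ds\wedge g$, not a warped-product formula, so you must actually carry out the computation you defer to the citation: with $\hat r_E=\hat\lambda_0\hat g_E$ the Ricci tensor takes the form $P(t)\,g+(Q(t)-P(t))\,dt\otimes dt$, whence $C=c(t)\,dt\wedge g$ for some function $c$; since $C$ is trace-free in its first and third slots while $dt\wedge g$ is not, $c\equiv 0$. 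The paper's own proof is more economical and does not pass through the warped-product normal form at all: it first gets $C(\cdot,\nabla u,\cdot)=C(\cdot,\cdot,\nabla u)=0$ from (\ref{eqn32}) and (\ref{eqnf22}), then uses the umbilicity $II=\frac m{n-1}g$ of the level sets (with $m$, $s$, $\alpha$ constant along them) together with the Codazzi equation to show $\langle R(E_i,E_j)E_k,N\rangle=0$, hence $d^Dr(E_i,E_j,E_k)=0$ by (\ref{dr1}) and so the purely tangential components of $C$ vanish as well. What your approach buys is conceptual clarity (the corollary becomes a formal consequence of the structure theorem plus a classical fact about such warped products); what the paper's approach buys is that it only needs the pointwise consequences of $D=H=0$ on regular level sets and avoids invoking the global normal form. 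Both arguments are local and both, like the paper's, implicitly work on the open set where $\nabla u\neq 0$ and extend by continuity, so there is no gap there; and there is no circularity in your reduction, since Theorem~\ref{thm901} is proved independently of the corollary.
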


In particular,  when $n=4$, the Einstein fibers in Theorem~\ref{thm301}  have constant curvature. A computation shows that such a metric is locally conformally flat, which proves the following theorem.
\begin{theorem}\label{thm303}
Let $(M, g, \nabla u, h, \lambda)$ be a $4$-dimensional Bach-flat $h$-almost gradient Ricci soltion with potential function $u$.  Assume that each level set of $u$ is compact and $h$ is a function of $u$ only with $\frac {dh}{du}>0$. Then, $(M,g)$ is locally conformally flat. 
\end{theorem}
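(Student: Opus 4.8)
The plan is to deduce this from Theorem~\ref{thm301} by exploiting the fact that in dimension four the Einstein fibers are three-dimensional. Since $\frac{dh}{du}>0$ on $M$, Theorem~\ref{thm301} puts us in case (2): in a neighborhood of every point $(M,g)$ is isometric to a warped product $I\times_\varphi N^{3}$, where $I\subset\mathbb{R}$ is an interval, $\varphi\colon I\to\mathbb{R}^{+}$ is smooth, and $(N^{3},g_{N})$ is Einstein (case (1) would make $u$, hence $h$, constant, contradicting $\frac{dh}{du}>0$). The first real observation is that a three-dimensional Einstein manifold has constant sectional curvature: its Weyl tensor vanishes identically for dimension reasons, its scalar curvature is constant by the contracted second Bianchi identity, and its traceless Ricci tensor vanishes by the Einstein hypothesis, so $\mathrm{Rm}^{N}$ is a constant multiple $k$ of the standard curvature model. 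Thus it suffices to show that a warped product $I\times_{\varphi}N$ over a one-dimensional base whose fiber has constant curvature is locally conformally flat.

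To prove this I would compute the Weyl tensor $\mathcal{W}$ of $\bar g=dt^{2}+\varphi(t)^{2}g_{N}$ directly. Choose an adapted orthonormal frame $\{\partial_{t},\varphi^{-1}E_{1},\dots,\varphi^{-1}E_{n-1}\}$ with $\{E_{i}\}$ orthonormal for $g_{N}$, and insert $\mathrm{Rm}^{N}=k\cdot(\text{standard model})$ into the O'Neill formulas for the curvature, Ricci, and scalar curvature of a warped product. Because the base is one-dimensional it carries no intrinsic curvature, so every component of $\mathrm{Rm}^{\bar g}$, $\mathrm{Ric}^{\bar g}$, and $\mathrm{scal}^{\bar g}$ is an explicit function of $\varphi'/\varphi$, $\varphi''/\varphi$, and $k$. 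Forming the Weyl tensor via the standard Ricci decomposition — $\mathrm{Rm}^{\bar g}$ equals $\mathcal{W}$ plus the Kulkarni--Nomizu product of $\bar g$ with the Schouten tensor $\tfrac{1}{n-2}\bigl(\mathrm{Ric}^{\bar g}-\tfrac{\mathrm{scal}^{\bar g}}{2(n-1)}\,\bar g\bigr)$ — the terms involving $\varphi''$ cancel automatically, as they must since the Weyl tensor of a warped product over an interval is insensitive to the warping function, and the remaining terms, built only from $\varphi'/\varphi$ and $k$, cancel precisely because $k$ is constant. Hence $\mathcal{W}\equiv 0$, i.e. $(M,g)$ is locally conformally flat. (Alternatively one may simply invoke the classical fact that a warped product over a one-dimensional base is locally conformally flat if and only if its fiber has constant curvature.) Note that this is genuinely stronger than Corollary~\ref{thm302}: being Einstein already makes the fiber force the Cotton tensor to vanish (harmonic Weyl curvature), but only constant curvature of the fiber kills $\mathcal{W}$ itself, and that is what the hypothesis $n=4$ supplies for free.

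The main obstacle is the curvature bookkeeping in the second step: one must check separately the components of $\mathcal{W}$ of the form $\mathcal{W}(\partial_{t},\cdot\,,\partial_{t},\cdot)$ and those lying entirely in the fiber directions, keeping track of the $\varphi$-dependent coefficients accurately so that the cancellations are visible. Everything else — excluding case (1) of Theorem~\ref{thm301}, and reducing three-dimensional Einstein metrics to space forms — is immediate, so the proof is short once the warped-product computation is set up carefully.
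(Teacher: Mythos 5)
Your proposal is correct and follows essentially the same route as the paper: invoke Theorem~\ref{thm301} to obtain the warped product structure with $3$-dimensional Einstein (hence constant curvature) fibers, and then verify by the standard warped-product/Weyl computation that such a metric is locally conformally flat. The paper compresses this into one sentence in the introduction, so your more explicit outline of the Weyl-tensor cancellation is a faithful elaboration rather than a different argument.
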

We remark that, as in \cite{bet1}, Theorems~\ref{thm301}, Corollary~\ref{thm302}, and Theorem~\ref{thm303} can be extended to the case in which $M$ has a non-empty boundary. 

\section{Preliminaries}
In this section, we derive several useful identities containing various curvatures and the Cotton tensor.

We start with basic definitions of differential operators acting on tensors.
Let us denote by $C^{\infty}(S^2M)$ the space of sections of symmetric $2$-tensors on a Riemannian manifold $M$. 
Let $D$ be the Levi-Civita connection of $(M, g)$.
 Then the differential operator $d^D$ from $C^{\infty}(S^2M)$ into 
  $C^\infty\left(\Lambda^2 M \otimes T^*M\right)$ is defined as
$$ 
d^D \omega(X,Y,Z)= (D_X \omega)(Y,Z)-(D_Y \omega)(X,Z)
$$
for $\omega \in C^{\infty}(S^2M)$ and vectors $X, Y$, and $Z$. 
Let us denote by $\delta^D$ the formal adjoint operator of $d^D$.

 For a function $f \in C^\infty(M)$ and $\omega \in C^{\infty}(S^2M)$,
$df \wedge \omega $ is defined as
$$
(df \wedge \omega) (X,Y,Z)= df(X) \omega(Y,Z)-df(Y) \omega(X,Z).
$$
Here, $df$ denotes the usual total differential of $f$. We also denote by $\delta$  the negative divergence operator so that
$\Delta f = - \delta d f$.

Taking the trace of (\ref{basic}) gives
$$ s_g+h\, \tr u =n \, \lambda.$$
Thus, 
$$ ds_g +\tr u\, dh +h\, d\tr u =n\, d\lambda . $$
By taking the divergence of (\ref{basic}), we have
$$ -\frac 12 \, ds_g -D_gdu(\nabla h, \cdot) -h\, r_g(\nabla u, \cdot)-h\, d\tr u =-d\lambda. $$
By adding the previous two equations, we have
\be \frac 12 \, ds_g -D_gdu(\nabla h, \cdot) -h\, r_g(\nabla u, \cdot) +\tr u\, dh =(n-1)\, d\lambda. \label{eqn23}
\ee
Note that 
\be
\delta (hr_g(\nabla u, \cdot))=-r_g(\nabla u, \nabla h)-\frac h2 \langle \nabla s_g, \nabla u\rangle +|r_g|^2-\lambda s_g.
\label{eqn420}\ee
Therefore, we have the following equality.
\begin{proposition} On $M$ we have
\bea
(n-1)\tr \lambda &=& \frac 12\, \tr s_g +|r_g|^2-\lambda s_g-\frac h2 \langle \nabla s_g, \nabla u\rangle \\
& & +\left( \tr u -\frac {\lambda}h\right)\, \tr h   +\frac 1h \, \langle r_g, D_gdh\rangle  -2\, r_g(\nabla u, \nabla h).
\eea
\end{proposition}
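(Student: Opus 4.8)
The plan is to take the divergence of identity~(\ref{eqn23}), that is, to apply $\delta$ to both sides. On the left this is immediate: since $\delta d=-\Delta$ we get $\delta\big((n-1)\,d\lambda\big)=-(n-1)\,\tr\lambda$, which after rearranging supplies the leading term of the statement. On the right there are four one-forms to differentiate. Two are harmless: $\delta\big(\tfrac12\,ds_g\big)=-\tfrac12\,\tr s_g$, while $\delta\big(h\,r_g(\nabla u,\cdot)\big)$ has already been recorded in~(\ref{eqn420}). The remaining two, $\delta\big(D_gdu(\nabla h,\cdot)\big)$ and $\delta\big(\tr u\, dh\big)$, each require a short computation, and it is there that the remaining nontrivial terms are generated.

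For $\delta(\tr u\, dh)$ I would use the elementary identity $\delta(f\, d\psi)=-\langle\nabla f,\nabla\psi\rangle-f\,\Delta\psi$ with $f=\tr u$ and $\psi=h$, obtaining $-\langle\nabla\tr u,\nabla h\rangle-\tr u\,\tr h$. For $\delta\big(D_gdu(\nabla h,\cdot)\big)$ I would write this one-form in indices as $u_{,ij}h^{,i}$ and differentiate; the only substantive input is the Bochner commutation $\nabla^{j}u_{,ij}=\nabla_{i}(\tr u)+R_{ij}\nabla^{j}u$ for the divergence of the Hessian, which follows from the symmetry of $D_gdu$ together with the Ricci identity. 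This yields
\[
\delta\big(D_gdu(\nabla h,\cdot)\big)=-\langle\nabla\tr u,\nabla h\rangle-r_g(\nabla u,\nabla h)-\langle D_gdu,\,D_gdh\rangle .
\]
Keeping the sign conventions for $\delta$ and $\Delta$ straight throughout this step is really the only bookkeeping hazard, and it is the step I expect to be the main source of error.

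Finally I would assemble the four contributions. The two copies of $\langle\nabla\tr u,\nabla h\rangle$, one coming from $\delta\big(D_gdu(\nabla h,\cdot)\big)$ and one from $\delta\big(\tr u\, dh\big)$, cancel, which is what makes the resulting identity clean. To remove the leftover term $\langle D_gdu, D_gdh\rangle$ I would invoke the soliton equation~(\ref{basic}) in the form $h\,D_gdu=\lambda g-r_g$ (here $h$ is nowhere zero, being a signal function), so that
\[
\langle D_gdu,\,D_gdh\rangle=\frac{\lambda}{h}\,\tr h-\frac1h\,\langle r_g,\,D_gdh\rangle .
\]
Substituting this, grouping the two $\tr h$ terms into $\big(\tr u-\tfrac{\lambda}{h}\big)\tr h$, and multiplying the whole equation by $-1$ then produces exactly the asserted formula.
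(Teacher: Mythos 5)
Your proof is correct and follows the same route the paper intends: apply the (negative) divergence $\delta$ to identity~(\ref{eqn23}), use the prepared formula~(\ref{eqn420}) for $\delta\big(h\,r_g(\nabla u,\cdot)\big)$, compute $\delta\big(D_gdu(\nabla h,\cdot)\big)$ via the contracted second Bianchi/Bochner identity, and eliminate $\langle D_gdu,D_gdh\rangle$ with the soliton equation $h\,D_gdu=\lambda g-r_g$. The signs, the cancellation of the two $\langle\nabla\tr u,\nabla h\rangle$ terms, and the final grouping all check out against the stated identity.
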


On the other hand, by applying $d^D$ to (\ref{basic}), we have
\be d^Dr_g -\frac 1h\,  dh \wedge r_g +h\, \tilde{i}_{\nabla u} R = d\lambda \wedge g -\frac {\lambda }h \, dh \wedge g.\label{dr1}\ee
Here, an interior product $\tilde{i}$ of the final factor  is defined by 
$$\tilde{i}_{\xi}R(X,Y,Z)= R(X,Y, Z, \xi),$$ and we used the identity 
$$ d^DDdu =\tilde{i}_{\nabla u}R. $$

Hereafter, we denote $s_g$, $r_g$, and $D_gdu$ by $s$, $r$, and $Ddu$, respectively. 
From the curvature decomposition, we can compute that 
$$\tilde{i}_{\nabla u}R=\tilde{i}_{\nabla u}{\mathcal W}-\frac 1{n-2}i_{\nabla u} r\wedge g +\frac s{(n-1)(n-2)}du \wedge g -\frac 1{n-2}du \wedge r,$$
where $i_{\nabla u}r$ denotes the interior product defined by $$i_{\nabla u}r(X)=r(\nabla u, X).$$ 
The Cotton tensor $C$ is defined by
\be C=d^Dr -\frac 1{2(n-1)}ds\wedge g.\label{eqn31} \ee
Then, by (\ref{eqn23}) and (\ref{dr1}) as well as the fact that  $$s+h\, \tr u =n\lambda,$$
we have
\begin{eqnarray} C+h\,  \tilde{i}_{\nabla u}{\mathcal W}&=&  h\, D +\frac h{n-1}\, i_{\nabla u}r\wedge g +d\lambda\wedge g  -\frac 1{2(n-1)}\, ds \wedge g \nonumber \\
& &  +\frac 1h\, dh\wedge r -\frac {\lambda}h \, dh\wedge g \nonumber\\
&=&h\, D + H,\label{eqn24}
\end{eqnarray}
where $D$ is defined (as usual) by
\be (n-2)\, D=du\wedge r +\frac 1{n-1}i_{\nabla u}r \wedge g -\frac {s}{n-1}du \wedge g,\label{defndd}\ee
and $H$ is defined by
\bea H&=& -\frac 1{n-1}\, i_{\nabla h}Ddu\wedge g +dh\wedge \left( \frac 1h\, r +\frac {\tr u}{n-1}g-\frac {\lambda}h\, g\right)\\
&=& db \wedge r +\frac 1{n-1}\, i_{\nabla b}r \wedge g -\frac s{n-1}\, db\wedge g .
\eea
Here, $b=\log  |h|$ with $\nabla b =\frac {\nabla h}h$. In particular, $g^{ik}H_{ijk}=-g^{ik}H_{jik}=0$.

\begin{proposition} Let $(M,g, \nabla u, h, \lambda)$ be an $h$-almost gradient Ricci soliton with potential function $u$. Then
$$ C+h\, \tilde{i}_{\nabla u}{\mathcal W}= h\, D+H.$$
In particular, if $h$ is constant or $\frac {dh}{du}=0$, $H\equiv 0$.
\end{proposition}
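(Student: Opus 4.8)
The identity $C+h\,\tilde{i}_{\nabla u}{\mathcal W}=h\,D+H$ is exactly the conclusion of the computation (\ref{eqn24}) above, so the task is to carry out that computation carefully and then to check that the three displayed expressions for $H$ coincide. The plan is as follows. First I would re-derive (\ref{dr1}) from the defining equation (\ref{basic}) by applying $d^D$: the Leibniz-type rule gives $d^D(h\,Ddu)=dh\wedge Ddu+h\,d^DDdu$, and combining this with the commutation identity $d^DDdu=\tilde{i}_{\nabla u}R$ and with $h\,Ddu=\lambda g-r$ (used inside the term $dh\wedge Ddu$) yields
$$d^Dr-\frac1h\,dh\wedge r+h\,\tilde{i}_{\nabla u}R=d\lambda\wedge g-\frac{\lambda}h\,dh\wedge g.$$
Next I would substitute the curvature decomposition of $\tilde{i}_{\nabla u}R$ recorded above and replace $d^Dr$ by $C+\frac1{2(n-1)}ds\wedge g$ via the definition (\ref{eqn31}) of the Cotton tensor.

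After these substitutions every term is a wedge product of one of $du,dh,ds,d\lambda$ with $g$ or $r$, or the term $i_{\nabla u}r\wedge g$. Comparing the three $h$-multiples $du\wedge r$, $du\wedge g$, $i_{\nabla u}r\wedge g$ that come from the Ricci and scalar parts of the decomposition with the definition (\ref{defndd}) of $D$, one finds that $h\,D$ absorbs the $du\wedge r$ and $du\wedge g$ terms exactly and the $i_{\nabla u}r\wedge g$ term up to a residual $\frac h{n-1}\,i_{\nabla u}r\wedge g$; collecting this residue together with the leftover $d\lambda$-, $ds$- and $dh$-terms reproduces the first expression for $H$ in (\ref{eqn24}), which is the asserted identity.

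It remains to verify the other two forms of $H$ and the final assertion. Using the contracted equation (\ref{eqn23}) to eliminate $d\lambda$ cancels the $i_{\nabla u}r\wedge g$ and $ds\wedge g$ contributions and leaves $H=-\frac1{n-1}\,i_{\nabla h}Ddu\wedge g+dh\wedge\left(\frac1h r+\frac{\tr u}{n-1}g-\frac\lambda h g\right)$. Then, rewriting $i_{\nabla h}Ddu$ by means of $Ddu=\frac\lambda h g-\frac1h r$ from (\ref{basic}), writing $\nabla h=h\,\nabla b$ with $b=\log|h|$, and substituting $h\,\tr u=n\lambda-s$ from the trace of (\ref{basic}), all the $\lambda$-terms cancel and one obtains $H=db\wedge r+\frac1{n-1}\,i_{\nabla b}r\wedge g-\frac s{n-1}\,db\wedge g$; a one-line contraction against $g^{ik}$ then gives $g^{ik}H_{ijk}=0$. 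Finally, if $h$ is constant then $dh=0$, and if $\frac{dh}{du}=0$ then $\nabla h=\frac{dh}{du}\nabla u=0$, so again $dh=0$; since every term of $H$ in the last form carries the factor $db=\frac{dh}h$, we conclude $H\equiv0$.

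The work is entirely bookkeeping of wedge products and their scalar coefficients, so nothing is conceptually difficult; the main places to be careful are the precise coefficient $\frac h{n-1}$ of the $i_{\nabla u}r\wedge g$ residue left after forming $h\,D$, and the passage between the three forms of $H$, which is where the soliton structure actually enters through (\ref{eqn23}) and the trace relation $s+h\,\tr u=n\lambda$. I would also double-check the sign convention in $d^DDdu=\tilde{i}_{\nabla u}R$, since a sign slip there propagates throughout.
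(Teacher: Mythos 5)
Your proposal is correct and follows essentially the same route as the paper: apply $d^D$ to the soliton equation to obtain the identity (\ref{dr1}), substitute the curvature decomposition of $\tilde{i}_{\nabla u}R$ and the definition (\ref{eqn31}) of the Cotton tensor, collect the $du$-terms into $h\,D$ via (\ref{defndd}), and then use (\ref{eqn23}) together with $s+h\,\tr u=n\lambda$ to pass between the two displayed forms of $H$, from which $H\equiv 0$ when $dh=0$ is immediate. The coefficient bookkeeping you flag as the delicate point is exactly where the paper's computation lives, and your outline handles it correctly.
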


\section{Bach-flat metrics}
In this section, we assume that $g$ is Bach-flat. 
Note that
$$\delta {\mathcal W}=-\frac {n-3}{n-2}\, C.$$
Recall that the Bach tensor is given by
$$ B =\frac 1{n-3}\, \delta^D\delta {\mathcal W}+\frac 1{n-2}\mathring{\mathcal W}z=\frac 1{n-2}\left( -\delta C +\mathring{\mathcal W}z\right).$$
Since
\bea
\delta(h\,  \tilde{i}_{\nabla u}{\mathcal W})(X,Y)&=&  -{\mathcal W}(\nabla h, X, Y, \nabla u)+h\, \delta {\mathcal W}(X,Y, \nabla u)\\
& & +h\, {\mathcal W}(X, E_i, Y, D_{E_i}du)\\
&=& {\mathcal W}(X, \nabla h, Y, \nabla u)- \frac {n-3}{n-2}\, h\, C(Y, \nabla u, X)- \mathring{\mathcal W}z,
\eea
by taking the divergence of (\ref{eqn24}) we have
\bea
-(n-2) B(X,Y)&=&-{\mathcal W}(X, \nabla h, Y, \nabla u)+\frac {n-3}{n-2}\, h\, C(Y, \nabla u, X) \\
& &-i_{\nabla h}D(X,Y)+h\, \delta D(X,Y)+\delta H(X,Y).
\eea
Hence, 
$$ -(n-2)B(\nabla u, \nabla u)= - D(\nabla h, \nabla u, \nabla u)+h\,  \delta D(\nabla u, \nabla u)+\delta H(\nabla u, \nabla u).
$$
As a result, from the assumption that  $B=0$ and $h$ is a function of $u$ only, 
$$ 0=\frac 1h D(\nabla h, \nabla u, \nabla u)= \delta D(\nabla u, \nabla u)+\frac 1h \, \delta H (\nabla u, \nabla u).$$
Let $\{E_i\}_{i=1}^n$ be a normal geodesic frame.
Note that,  since
$$ hD(E_i, D_{E_i}du, \nabla u)=-D(E_i, E_k, \nabla u)r_{ik}=0,$$
we have
$$\mbox{\rm div} (D(\cdot, \nabla u, \nabla u))=-\delta D(\nabla u, \nabla u)+ D(E_i, \nabla u, D_{E_i}du).$$
Furthermore,
\bea
|D|^2&=& \frac 1{n-2} \left( du(E_i)r(E_j,E_k)- du(E_j) r(E_i, E_k) \right)D_{ijk}\\
&=&  -\frac 2{n-2} \, D(E_i, \nabla u, E_k)\, r_{ik}\\
&=&	\frac {2\,h}{n-2}\,  D(E_i, \nabla u, D_{E_i}du).
\eea
Similarly, 
since
$$ hH(E_i, D_{E_i}du, \nabla u)= -  H(E_i, E_k, \nabla u)\, r_{ik}=0$$
and $h$ is a function of $u$ only,  we have
$$\mbox{\rm div} \left(\frac 1h\, H(\cdot, \nabla u, \nabla u)\right)=-\frac 1h \, \delta H(\nabla u, \nabla u)+ \frac 1h\, H(E_i, \nabla u, D_{E_i}du).$$
Moreover,
\bea |H|^2&=&  -\frac 2h\, H (E_i, \nabla h, E_k)\, r_{ik} = -\frac 2h\,  \frac {dh}{du}\, H(E_i, \nabla u, E_k)\, r_{ik}\\
&=& 2\,  \frac {dh}{du}\, H(E_i, \nabla u, D_{E_i}du).
\eea
Thus, 
\bea
0&=& \int_{t_1\leq u\leq t_2} \delta D(\nabla u, \nabla u)+\frac 1h \, \delta H (\nabla u, \nabla u)\\
&=& \frac {n-2}2\int_{t_1\leq u\leq t_2}\frac {|D|^2}h + \frac 12\, \int_{t_1\leq u\leq t_2} \frac {|H|^2}{h\,\frac {dh}{du}}.
\eea
Since $h$ is signal, $h$ is either positive or negative. For each case, we derive $D=H=0$ when $\frac {dh}{du}>0$. 
Therefore we have the following result. 
\begin{lemma} \label{lem22}
Let $(M, g, \nabla u, h, \lambda)$ be a Bach-flat $h$-almost gradient Ricci soliton with potential function $u$.
Assume that each level set of $u$ is compact and $h$ is a function of $u$ only. 
If $\frac {dh}{du}>0$ on $M$, then on $M$ we have
$$ D=H=0.$$
\end{lemma}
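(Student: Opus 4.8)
The plan is to pair the Bach tensor with $\nabla u\otimes\nabla u$, reduce the resulting tensor identity to a divergence formula, and integrate over a slab bounded by two level sets of $u$; the sign conditions on $h$ then force $D$ and $H$ to vanish.

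First I would apply the divergence $\delta$ to the identity~(\ref{eqn24}), $C+h\,\tilde{i}_{\nabla u}{\mathcal W}=h\,D+H$. The only nonroutine piece is $\delta(h\,\tilde{i}_{\nabla u}{\mathcal W})$, which I would handle with the contracted second Bianchi identity $\delta{\mathcal W}=-\tfrac{n-3}{n-2}\,C$ together with the soliton equation~(\ref{basic}) in the form $h\,D_gdu=\lambda g-r$ to absorb the term $h\,{\mathcal W}(X,E_i,Y,D_{E_i}du)$; this yields
\[\delta(h\,\tilde{i}_{\nabla u}{\mathcal W})(X,Y)={\mathcal W}(X,\nabla h,Y,\nabla u)-\tfrac{n-3}{n-2}\,h\,C(Y,\nabla u,X)-\mathring{\mathcal W}z(X,Y).\]
Combining this with the Bach formula $B=\tfrac1{n-2}\bigl(-\delta C+\mathring{\mathcal W}z\bigr)$ and $\delta$ of~(\ref{eqn24}) produces
\[-(n-2)B(X,Y)=-{\mathcal W}(X,\nabla h,Y,\nabla u)+\tfrac{n-3}{n-2}\,h\,C(Y,\nabla u,X)-i_{\nabla h}D(X,Y)+h\,\delta D(X,Y)+\delta H(X,Y).\]
Evaluating at $X=Y=\nabla u$ kills the Weyl term (antisymmetry of ${\mathcal W}$ in its last two arguments), the Cotton term (antisymmetry of $C$ in its first two arguments), and the term $i_{\nabla h}D(\nabla u,\nabla u)=\tfrac{dh}{du}\,D(\nabla u,\nabla u,\nabla u)=0$ (here $\nabla h$ is a multiple of $\nabla u$ since $h=h(u)$, and $D$ is antisymmetric in its first two arguments). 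With $B=0$ this leaves the pointwise identity $\delta D(\nabla u,\nabla u)+\tfrac1h\,\delta H(\nabla u,\nabla u)=0$.

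Next I would turn each summand into an honest divergence. In a normal geodesic frame $\{E_i\}$, the first-slot antisymmetry of $D$ and $H$ against the symmetry of $r$ gives $h\,D(E_i,D_{E_i}du,\nabla u)=-D(E_i,E_k,\nabla u)r_{ik}=0$ and the same for $H$; using also $h=h(u)$ to discard the term that differentiates $1/h$, one obtains
\[\mathrm{div}\bigl(D(\cdot,\nabla u,\nabla u)\bigr)=-\delta D(\nabla u,\nabla u)+D(E_i,\nabla u,D_{E_i}du),\]
\[\mathrm{div}\Bigl(\tfrac1h\,H(\cdot,\nabla u,\nabla u)\Bigr)=-\tfrac1h\,\delta H(\nabla u,\nabla u)+\tfrac1h\,H(E_i,\nabla u,D_{E_i}du).\]
A short computation with the explicit formula~(\ref{defndd}) for $D$ (and the analogous one for $H$), the trace-freeness of $D$ and $H$, and $h\,D_gdu=\lambda g-r$ then identifies the remainders: $D(E_i,\nabla u,D_{E_i}du)=\tfrac{n-2}{2h}\,|D|^2$ and $H(E_i,\nabla u,D_{E_i}du)=\tfrac1{2\,\frac{dh}{du}}\,|H|^2$. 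Integrating $0=\delta D(\nabla u,\nabla u)+\tfrac1h\,\delta H(\nabla u,\nabla u)$ over the compact region $\{t_1\le u\le t_2\}$, the divergence terms contribute only boundary integrals over the level sets $\{u=t_i\}$, whose unit normals are parallel to $\nabla u$; those integrals vanish because $D(\nabla u,\nabla u,\nabla u)=H(\nabla u,\nabla u,\nabla u)=0$. What is left is
\[0=\frac{n-2}{2}\int_{t_1\le u\le t_2}\frac{|D|^2}{h}+\frac12\int_{t_1\le u\le t_2}\frac{|H|^2}{h\,\frac{dh}{du}}.\]
Since $h$ is a signal function it has a single sign on $M$; when $\tfrac{dh}{du}>0$ the two integrands are simultaneously $\ge0$ (if $h>0$) or simultaneously $\le0$ (if $h<0$), so the integral can vanish only if $D\equiv H\equiv0$ on the slab, and letting $t_1<t_2$ vary gives $D=H=0$ on $M$.

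I expect the main obstacle to be the first step: carrying out $\delta(h\,\tilde{i}_{\nabla u}{\mathcal W})$ correctly and verifying that the Weyl and Cotton contributions genuinely drop out once the full Bach identity is contracted with $\nabla u\otimes\nabla u$ — this is precisely what collapses an otherwise unusable tensor identity into the single scalar equation $\delta D(\nabla u,\nabla u)+\tfrac1h\,\delta H(\nabla u,\nabla u)=0$. The remaining delicate point is the algebra behind $|D|^2$ and $|H|^2$, where the first-slot antisymmetry and trace-freeness of $D$ and $H$ must be combined with the soliton equation in exactly the right order.
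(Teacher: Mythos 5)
Your proposal is correct and follows essentially the same route as the paper: contracting the divergence of the identity $C+h\,\tilde{i}_{\nabla u}{\mathcal W}=h\,D+H$ with $\nabla u\otimes\nabla u$, rewriting $\delta D(\nabla u,\nabla u)$ and $\tfrac1h\,\delta H(\nabla u,\nabla u)$ as divergences plus the remainders $\tfrac{n-2}{2h}|D|^2$ and $\tfrac{1}{2h\,\frac{dh}{du}}|H|^2$, and integrating over a slab between level sets. The only additions you make are to spell out explicitly why the boundary terms and the Weyl/Cotton contributions vanish, which the paper leaves implicit.
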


Now, since $D=H=0$, by (\ref{eqn31}) and (\ref{eqn24})
\be C= -h\, \tilde{i}_{\nabla u}{\mathcal W}.\label{eqn32}
\ee
By taking the divergence of (\ref{eqn32}), we have
$$ {\mathcal W}(X, \nabla h, Y, \nabla u)= \frac {n-3}{n-2}\, h\, C(Y, \nabla u, X).$$
By combining these equations, 
$$ \frac {n-3}{n-2}\, h^2\, C(Y, \nabla u, X)=-C(X, \nabla h, Y),$$
and 
$$ {\mathcal W}(X, \nabla h, Y, \nabla u)= -\frac {n-3}{n-2}\, h^2\, {\mathcal W}(X, \nabla u, Y, \nabla u).$$
Therefore, we have the following.
\begin{corollary}\label{cor33} When $D=H=0$, we have
\be {\mathcal W}(\cdot , \nabla u, \cdot, \nabla u)=C(\cdot, \nabla u, \cdot)=0,\label{eqnf22}\ee unless
$$ \frac {dh}{du}= -\left( \frac{n-3}{n-2}\right)\, h^2. $$
\end{corollary}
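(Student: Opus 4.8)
The plan is to start from the identity $C=-h\,\tilde{i}_{\nabla u}\mathcal{W}$ recorded in (\ref{eqn32}) and squeeze out a pointwise scalar constraint by differentiating it once more. Concretely, I would apply $\delta$ to (\ref{eqn32}): on the left $\delta C=\mathring{\mathcal{W}}z$, because $B=0$ and $(n-2)B=-\delta C+\mathring{\mathcal{W}}z$; on the right I would use the divergence formula for $\delta(h\,\tilde{i}_{\nabla u}\mathcal{W})$ already displayed in Section~3. The two $\mathring{\mathcal{W}}z$ contributions cancel, and what survives is exactly
$$\mathcal{W}(X,\nabla h,Y,\nabla u)=\frac{n-3}{n-2}\,h\,C(Y,\nabla u,X).$$
This is the only genuinely new input; everything past this point is symmetry bookkeeping.

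Next I would feed in the two structural facts at hand. Since $h$ is a function of $u$, $\nabla h=\tfrac{dh}{du}\,\nabla u$, so the left-hand side is $\tfrac{dh}{du}\,\mathcal{W}(X,\nabla u,Y,\nabla u)$. On the right-hand side I would substitute $C(Y,\nabla u,X)=-h\,\mathcal{W}(Y,\nabla u,X,\nabla u)$ from (\ref{eqn32}) and then use the pair symmetry $\mathcal{W}(Y,\nabla u,X,\nabla u)=\mathcal{W}(X,\nabla u,Y,\nabla u)$. Both sides then become multiples of the single tensor $\mathcal{W}(X,\nabla u,Y,\nabla u)$, and the whole relation collapses to
$$\Bigl(\frac{dh}{du}+\frac{n-3}{n-2}\,h^2\Bigr)\,\mathcal{W}(X,\nabla u,Y,\nabla u)=0$$
for all vectors $X,Y$.

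To conclude, at any point where the scalar factor $\tfrac{dh}{du}+\tfrac{n-3}{n-2}h^2$ does not vanish we obtain $\mathcal{W}(\cdot,\nabla u,\cdot,\nabla u)=0$, and then $C(\cdot,\nabla u,\cdot)=-h\,\mathcal{W}(\cdot,\nabla u,\cdot,\nabla u)=0$ directly from (\ref{eqn32}); this is precisely (\ref{eqnf22}), with the exceptional set being where $\tfrac{dh}{du}=-\tfrac{n-3}{n-2}h^2$ (a continuity argument can be used to push the vanishing onto the closure of the good set). I do not expect a substantive analytic obstacle: the argument is a short chain of substitutions. The one place to be careful is the index bookkeeping — correctly tracking which slot $\tilde{i}_{\nabla u}$ and the interior products occupy when the divergence of (\ref{eqn32}) is taken, and using the Weyl antisymmetries and pair symmetry so that the $\mathring{\mathcal{W}}z$ terms really do cancel and the right-hand side genuinely reduces to a multiple of $\mathcal{W}(X,\nabla u,Y,\nabla u)$ and not some other contraction.
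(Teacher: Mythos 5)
Your proposal is correct and follows essentially the same route as the paper: taking the divergence of $C=-h\,\tilde{i}_{\nabla u}\mathcal{W}$, using Bach-flatness so the $\mathring{\mathcal{W}}z$ terms cancel to get $\mathcal{W}(X,\nabla h,Y,\nabla u)=\frac{n-3}{n-2}hC(Y,\nabla u,X)$, and then substituting $\nabla h=\frac{dh}{du}\nabla u$ together with (\ref{eqn32}) and the pair symmetry of $\mathcal{W}$ to collapse everything to $\bigl(\frac{dh}{du}+\frac{n-3}{n-2}h^2\bigr)\mathcal{W}(\cdot,\nabla u,\cdot,\nabla u)=0$. This is exactly the paper's argument.
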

For example, when $h=-\frac mu$, (\ref{eqnf22}) holds if $m\neq 0$ or $-\frac {n-2}{n-3}$. Note that (\ref{eqnf22}) also holds if $h$ is constant.
\vskip .5pc
Moreover, we have the following result. 
\begin{lemma} \label{lem336} Suppose that $\frac {dh}{du}>0$. 
Then, for $X$ orthogonal to $\nabla u$, 
\be r(X, \nabla u)=0.\label{str1}\ee
In particular, $$i_{\nabla u}r=\a \, du,$$
where $\a =r(N, N)$ with $N=\nabla u/|\nabla u|$. 
\end{lemma}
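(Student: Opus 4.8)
The plan is to read off (\ref{str1}) directly from the vanishing of the auxiliary tensor $D$. By Lemma~\ref{lem22}, the hypothesis $\frac{dh}{du}>0$ gives $D\equiv 0$ and $H\equiv 0$ on $M$; only $D=0$ will be needed here.

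First I would evaluate the defining formula (\ref{defndd}) for $D$ on a triple of the form $(X,\nabla u,\nabla u)$, where, at a point with $\nabla u\neq 0$, the vector $X$ is taken orthogonal to $\nabla u$, so that $du(X)=\langle X,\nabla u\rangle=0$. In the three terms $du\wedge r$, $\frac1{n-1}\,i_{\nabla u}r\wedge g$, and $\frac{s}{n-1}\,du\wedge g$ making up $(n-2)D$, every summand carrying the factor $du(X)=\langle X,\nabla u\rangle$ disappears; what survives is $-|\nabla u|^2\,r(X,\nabla u)$ from the first term and $\frac1{n-1}\,|\nabla u|^2\,r(X,\nabla u)$ from the second, while the third vanishes outright. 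Hence
\[
(n-2)\,D(X,\nabla u,\nabla u)=-\frac{n-2}{n-1}\,|\nabla u|^2\,r(X,\nabla u),
\qquad\text{so}\qquad
D(X,\nabla u,\nabla u)=-\frac{|\nabla u|^2}{n-1}\,r(X,\nabla u).
\]
Since $D=0$ and $|\nabla u|\neq 0$ at the chosen point, $r(X,\nabla u)=0$ there; at a critical point of $u$ the identity (\ref{str1}) is vacuous, so (\ref{str1}) holds on all of $M$.

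For the final assertion I would work on the open set $\{\nabla u\neq 0\}$, where $N=\nabla u/|\nabla u|$ and $\a=r(N,N)$ are defined, and decompose an arbitrary tangent vector as $Y=Y^{\top}+\langle Y,N\rangle N$ with $Y^{\top}\perp\nabla u$. Then (\ref{str1}) annihilates the $Y^{\top}$ part, and
\[
i_{\nabla u}r(Y)=r(\nabla u,Y)=|\nabla u|\,\langle Y,N\rangle\,r(N,N)=\a\,|\nabla u|\,\langle Y,N\rangle=\a\,du(Y),
\]
so that $i_{\nabla u}r=\a\,du$ on $\{\nabla u\neq 0\}$; since both $i_{\nabla u}r$ and $du$ vanish at critical points of $u$, the identity extends over all of $M$.

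I do not expect a genuine obstacle here: the heart of the matter is the one-line contraction of (\ref{defndd}) displayed above, and the whole content of the lemma is carried by the coefficient $-\frac1{n-1}$ appearing there together with $D\equiv 0$ from Lemma~\ref{lem22}. The only point needing a moment's care is that $N$, and hence $\a$, is defined only on $\{\nabla u\neq 0\}$; this is harmless, since both sides of (\ref{str1}) and of $i_{\nabla u}r=\a\,du$ degenerate trivially on the critical locus of $u$.
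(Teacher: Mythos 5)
Your proof is correct, and it takes a genuinely different and more economical route than the paper's. Your whole argument rests on the pointwise algebraic identity
$(n-2)\,D(X,\nabla u,\nabla u)=-\tfrac{n-2}{n-1}\,|\nabla u|^{2}\,r(X,\nabla u)$ for $X\perp\nabla u$, read off directly from the definition (\ref{defndd}); I have checked the contraction and the coefficient $-\tfrac{1}{n-1}$ is right, so $D=0$ from Lemma~\ref{lem22} gives (\ref{str1}) at once, and your handling of the critical locus is fine. The paper instead goes through the curvature identities: it uses $D=H=0$ to get $C=-h\,\tilde{i}_{\nabla u}{\mathcal W}$ from (\ref{eqn32}), hence $C(X,Y,\nabla u)=0$, then combines (\ref{dr1}), (\ref{eqn31}) and (\ref{eqn23}) to arrive at $\bigl((n-2)\tfrac{dh}{du}+h^{2}\bigr)\,r(X,\nabla u)=0$, and only then invokes $\tfrac{dh}{du}>0$ to make the prefactor nonzero. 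Two consequences of the comparison are worth noting. First, your argument shows that $D=0$ alone already forces $r(X,\nabla u)=0$, so the non-degeneracy condition (\ref{eqn209}) singled out in the remark following the lemma is not needed for this particular conclusion; your version of the lemma is in this sense slightly stronger. Second, the paper's longer derivation produces a by-product that is reused later, namely $d\lambda(X)=\tfrac{1}{2(n-1)}\,ds(X)$ for $X\perp\nabla u$, which is quoted at the start of Section 4 before Lemma~\ref{lem540}; with your route one still recovers that identity afterwards by substituting $r(X,\nabla u)=0$ back into the same chain of equations, but it does not fall out of your proof automatically.
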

\begin{proof}
By Lemma~\ref{lem22}, $D=H=0$.
From (\ref{dr1}), if $X$ is orthogonal to $\nabla u$,
$$
d^Dr(X,Y, \nabla u)= -\frac 1h\, dh(Y)\, r(X, \nabla u)+ d\lambda(X)du(Y).$$
Since $C (X,Y, \nabla u)=-h {\mathcal W}(X,Y, \nabla u, \nabla u)=0$ by (\ref{eqn32}), by (\ref{eqn31})  we have
$$  d^Dr(X, Y, \nabla u)= \frac 1{2(n-1)}\, ds(X)\, du(Y).
$$
Thus, by (\ref{eqn23})
\bea
\frac 1h\, \frac {dh}{du}\, r(X, \nabla u)&=& d\lambda(X)-\frac 1{2(n-1)}\, ds(X)\\
&=& \frac 1{(n-1)h}\, \left(\frac {dh}{du}- h^2\right)\, r(X,\nabla u),
\eea
which implies that 
$$\left( (n-2)\frac {dh}{du}+h^2\right) \, r(X, \nabla u)=0.$$
This completes the proof of our lemma.
\end{proof}

Note that Lemma~\ref{lem336}  holds with the assumptions that $D=H=0$ and 
\be \frac {dh}{du}\neq -\frac 1{n-2}\, h^2 \label{eqn209}\ee
 without $\frac {dh}{du}>0$. For example,  in the case of $m$-Bakry-Emery tensor, $h=-\frac mu$ satisfies (\ref{eqn209}) if $m\neq 2-n$.

\section{Level sets of $u$}
In this section, we will investigate the structure of  regular level sets of the potential function $u$. 
For a regular value $c$, we denote the level set $u^{-1}(u)$ by $L_c$. On $L_c$, 
let $\{E_i\}$, $1\leq i\leq n$, be an orthonoromal frame with  $E_n=N=\nabla u/|\nabla u|$. 

Futhermore, throughout the section we assume that $D=H=0$ with 
$$\frac {dh}{du}\neq -\left( \frac {n-3}{n-2}\right)\, h^2 \quad \mbox{and}\quad 
 \frac {dh}{du}\neq-\frac 1{n-2}\, h^2.$$
 Then, by Corollary~\ref{cor33}, (\ref{eqnf22}) and (\ref{str1}) hold. 
 Furthermore, for $X$ orthogonal to $\nabla u$, by the proof of Lemma~\ref{lem336},
$$d\lambda (X)=\frac 1{2(n-1)}\, ds (X).$$
Thus,
$  s+2(1-n)\lambda $ is constant on each level set of $u$.
Furthermore,
$$ \frac 12 \, X(|\nabla u|^2)= \langle D_Xdu, \nabla u\rangle =\frac 1h\, \left( \lambda du(X)- r(X, \nabla u)\right)=0,$$
which implies that $|\nabla u|^2$ is constant on each level set of $u$.  Therefore, we have the following.
\begin{lemma}\label{lem540}
$|\nabla u|^2$ and $s +2(1-n)\, \lambda$ are constant on each regular level set of $u$.
\end{lemma}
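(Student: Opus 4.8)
The plan is to extract both conclusions directly from the structural identities already established, namely \eqref{str1} from Lemma~\ref{lem336} together with the fundamental soliton equation \eqref{basic}. The key observation is that both quantities $|\nabla u|^2$ and $s+2(1-n)\lambda$ are scalar functions on $M$, so to show each is constant on a regular level set $L_c$ it suffices to show its differential annihilates every vector $X$ tangent to $L_c$, i.e.\ every $X$ orthogonal to $\nabla u$.

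First I would handle the scalar curvature combination. Starting from Lemma~\ref{lem336}, we have $C(X, Y, \nabla u) = 0$ whenever $X \perp \nabla u$, which via \eqref{eqn32} comes from $\mathcal W(X,Y,\nabla u,\nabla u)=0$; feeding this into the Cotton tensor definition \eqref{eqn31} gives $d^Dr(X,Y,\nabla u) = \frac{1}{2(n-1)} ds(X)\, du(Y)$ exactly as in the proof of Lemma~\ref{lem336}. Comparing this with the expression for $d^Dr(X,Y,\nabla u)$ obtained by applying $d^D$ to \eqref{basic} (formula \eqref{dr1}), and using \eqref{str1} to kill the term $\frac 1h dh(Y)\, r(X,\nabla u)$, one is left with $d\lambda(X) = \frac{1}{2(n-1)} ds(X)$ for all $X \perp \nabla u$. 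This is precisely the statement that $d\bigl(s + 2(1-n)\lambda\bigr)(X) = 0$ on $L_c$, hence $s + 2(1-n)\lambda$ is constant there.

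Next, for $|\nabla u|^2$: for $X$ orthogonal to $\nabla u$ one computes $\frac 12 X(|\nabla u|^2) = \langle D_X du, \nabla u\rangle = Ddu(X,\nabla u)$. Now solve \eqref{basic} for the Hessian, $Ddu = \frac 1h(\lambda g - r)$, so that $Ddu(X,\nabla u) = \frac 1h\bigl(\lambda\, du(X) - r(X,\nabla u)\bigr)$. Since $X \perp \nabla u$ gives $du(X) = 0$, and \eqref{str1} gives $r(X,\nabla u) = 0$, the whole expression vanishes. Hence $|\nabla u|^2$ is constant on $L_c$ as well.

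There is no real obstacle here: the lemma is a bookkeeping consequence of the already-proved orthogonality relation \eqref{str1} and the soliton equation. The only point requiring minor care is the hypothesis management — Lemma~\ref{lem336} was proved under $\frac{dh}{du} > 0$ (or more precisely under $D=H=0$ together with \eqref{eqn209}), and one should confirm that the standing assumptions of Section~4, namely $D=H=0$ with $\frac{dh}{du} \neq -\frac{n-3}{n-2}h^2$ and $\frac{dh}{du} \neq -\frac{1}{n-2}h^2$, are exactly what is needed to invoke both Corollary~\ref{cor33} and Lemma~\ref{lem336}. Granting those, the argument above is complete.
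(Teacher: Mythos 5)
Your argument is correct and coincides with the paper's own proof: the paper likewise extracts $d\lambda(X)=\frac{1}{2(n-1)}\,ds(X)$ for $X\perp\nabla u$ from the proof of Lemma~\ref{lem336} to get constancy of $s+2(1-n)\lambda$, and computes $\frac12 X(|\nabla u|^2)=\frac1h\left(\lambda\,du(X)-r(X,\nabla u)\right)=0$ using \eqref{str1}. Nothing further is needed.
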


For further investigation, we need the following key lemma.
\begin{lemma} \label{lem459}
\bea
0&=&\frac {ns-(n-1)^2\lambda-\a}{(n-1)h}\, r-D_{\nabla u} r -\frac {r\circ r}h +\frac {n-3}{2(n-1)}\,  du\otimes ds\\
&+ &\frac 1{n-1}\left( ds(u)-\langle \nabla u, \nabla \a\rangle\right) \, g
  +\frac {s+(1-n)\lambda }{(n-1)h}(\a -s)g +\frac 1{n-1}du \otimes d\a.
\eea
\end{lemma}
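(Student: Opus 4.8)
The plan is to obtain Lemma~\ref{lem459} by taking a divergence of the identity (\ref{eqn32}), namely $C=-h\,\tilde i_{\nabla u}{\mathcal W}$, and then rewriting every Weyl term that appears in terms of the Ricci tensor using the curvature decomposition together with the structural facts already established: $D=H=0$, equation (\ref{eqnf22}) (so ${\mathcal W}(\cdot,\nabla u,\cdot,\nabla u)=0$ and $C(\cdot,\nabla u,\cdot)=0$), the relation $i_{\nabla u}r=\alpha\,du$ from Lemma~\ref{lem336}, and the fact (from Lemma~\ref{lem540}) that $|\nabla u|^2$ and $s+2(1-n)\lambda$ are constant on level sets. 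Concretely, I would start from the definition $C=d^Dr-\frac1{2(n-1)}\,ds\wedge g$ and the Cotton symmetry $C(X,Y,Z)+C(Y,Z,X)+C(Z,X,Y)=0$, and contract (\ref{eqn32}) appropriately so that the left-hand side produces $D_{\nabla u}r$ and the curvature term $\tilde i_{\nabla u}R$ evaluated against $r$, which is where $r\circ r$ (the composition, i.e. $(r\circ r)_{ij}=r_{ik}r_{kj}$) enters.

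The key steps, in order, are: (i) differentiate (\ref{eqn32}); using $d^D Ddu=\tilde i_{\nabla u}R$ and (\ref{basic}) to express $D_{E_i}du=\frac1h(\lambda g-r)$, convert the ``$\nabla u$ slides through the derivative'' terms into algebraic Ricci expressions, generating the $\frac{r\circ r}{h}$ term and the $\alpha$–dependent coefficients. (ii) Expand $\tilde i_{\nabla u}{\mathcal W}$ via the stated decomposition $\tilde i_{\nabla u}R=\tilde i_{\nabla u}{\mathcal W}-\frac1{n-2}i_{\nabla u}r\wedge g+\frac s{(n-1)(n-2)}du\wedge g-\frac1{n-2}du\wedge r$, and substitute $i_{\nabla u}r=\alpha\,du$; all Weyl pieces contracted with $\nabla u$ in both remaining slots vanish by (\ref{eqnf22}), so only Ricci, scalar-curvature, and $\alpha$ terms survive. (iii) Use Lemma~\ref{lem336} and the identity $d\lambda(X)=\frac1{2(n-1)}ds(X)$ for $X\perp\nabla u$ — equivalently the level-set constancy of $s+2(1-n)\lambda$ — to replace $d\lambda$ everywhere by $\frac1{2(n-1)}ds$ plus a term along $du$, which produces the $\frac{n-3}{2(n-1)}du\otimes ds$ summand and the $du\otimes d\alpha$ summand (the latter coming from differentiating $i_{\nabla u}r=\alpha\,du$, i.e. $D_X(i_{\nabla u}r)$ contributes $X(\alpha)\,du+\alpha\,D_Xdu$). (iv) Collect all the $g$-proportional terms; the trace part of the second Bianchi / contracted-Cotton identity, together with $s=n\lambda-h\,\tr u$ and $\tr r=s$, forces the coefficient of $g$ to combine into $\frac1{n-1}(ds(u)-\langle\nabla u,\nabla\alpha\rangle)g+\frac{s+(1-n)\lambda}{(n-1)h}(\alpha-s)g$. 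Finally, read off the coefficient of $r$: the leftover scalar is $\frac{ns-(n-1)^2\lambda-\alpha}{(n-1)h}$, which is exactly the claimed coefficient.

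The main obstacle I anticipate is the bookkeeping in step (iv): correctly isolating and simplifying the $g$-component. Many contributions are proportional to $g$ — the trace terms from $i_{\nabla u}r\wedge g$, $du\wedge g$, and $du\wedge r$ in the decomposition, the trace of $d^Dr$ via the contracted second Bianchi identity $\operatorname{div} r=\frac12 ds$, and the terms generated by commuting $\nabla u$ past covariant derivatives — and getting the precise combination $\frac1{n-1}(ds(u)-\langle\nabla u,\nabla\alpha\rangle)+\frac{(s+(1-n)\lambda)(\alpha-s)}{(n-1)h}$ requires careful use of $\tr(Ddu)=\frac1h(n\lambda-s)=\tr u$ and of the level-set constancy results. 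The curvature-term manipulation in step (i) is also delicate because one must be careful about the order of the three arguments of $C$ and of $\tilde i_{\nabla u}{\mathcal W}$ (the paper's convention $C(Y,\nabla u,X)$ versus $C(X,\nabla u,Y)$, cf. the antisymmetrizations already used after (\ref{eqn32})), since a sign or index transposition error there would corrupt the $r\circ r$ term. Everything else — expanding wedge products, substituting the Hessian, relabeling frame indices — is routine once the conventions are pinned down.
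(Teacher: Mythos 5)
Your starting point is off target, and this is a genuine gap rather than a bookkeeping issue. The identity in Lemma~\ref{lem459} is nothing other than the explicit expansion of $(n-2)\,\delta D=0$: since Lemma~\ref{lem22} gives $D\equiv 0$, one also has $\delta D\equiv 0$, and the paper simply computes $\delta D$ term by term from the definition (\ref{defndd}), namely $\delta(du\wedge r)$, $\delta(i_{\nabla u}r\wedge g)$ and $\delta(s\,du\wedge g)$, using $Ddu=\frac1h(\lambda g-r)$, $i_{\nabla u}r=\alpha\,du$, and $\delta r=-\frac12 ds$. All the distinctive coefficients in the statement --- the $\frac{1}{n-1}$ weights on the $g$-terms and on $du\otimes d\alpha$, the factor $\frac{n-3}{2(n-1)}$, the scalar $\frac{ns-(n-1)^2\lambda-\alpha}{(n-1)h}$ multiplying $r$ --- are inherited directly from the $\frac{1}{n-1}$ and $-\frac{s}{n-1}$ weights inside (\ref{defndd}). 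Your proposal never engages with (\ref{defndd}) at all, so there is no mechanism in it that could produce these coefficients.

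Concretely, the route you describe --- taking the divergence of (\ref{eqn32}) and expanding the Weyl terms --- does not lead here. Taking $\delta$ of $C=-h\,\tilde i_{\nabla u}\mathcal W$ (equivalently of $C+h\,\tilde i_{\nabla u}\mathcal W=hD+H$ with $D=H=0$) only reproduces the relation ${\mathcal W}(X,\nabla h,Y,\nabla u)=\frac{n-3}{n-2}\,h\,C(Y,\nabla u,X)$, i.e.\ the material of Corollary~\ref{cor33}, because the right-hand side $\delta(hD)+\delta H$ vanishes identically and tells you nothing about the individual pieces of $\delta D$. The alternative reading of your step (i), expanding $0=C(X,\nabla u,Y)=D_Xr(\nabla u,Y)-D_{\nabla u}r(X,Y)-\frac1{2(n-1)}(ds\wedge g)(X,\nabla u,Y)$ with $i_{\nabla u}r=\alpha\,du$ and $Ddu=\frac1h(\lambda g-r)$, does produce terms of the right species ($D_{\nabla u}r$, $r\circ r/h$, $du\otimes d\alpha$, $du\otimes ds$, $g$) --- but it yields a \emph{different} identity: for instance the $r\circ r$ term comes out with coefficient $+\frac1h$ there, versus $-\frac1h$ in the lemma, and none of the $\frac{1}{n-1}$ weights appear. (The two identities are of course compatible, since the Ricci tensor has only two eigenvalues, but they are not the same statement.) To repair the proof, replace your starting point by: ``$D=0$ implies $\delta D=0$; compute $\delta D$ from (\ref{defndd}).''
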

\begin{proof}  To find $\delta D$, by (\ref{defndd}), we first compute
$$ \delta (du\wedge r)= \frac {s-(n-1)\lambda}h\, r-D_{\nabla u} r -\frac {r\circ r}h +\frac 12 du\otimes ds.$$
By Lemma~\ref{lem336}, $i_{\nabla u}r=\a\, du$. Thus,
$$ \delta (i_{\nabla u}r\wedge g)= -\langle \nabla u, \nabla \a\rangle g +\frac {s+(1-n)\lambda }h\a g +du \otimes d\a -\frac {\a}h\, r.
$$
Similarly,
$$ -\delta (s\, du\wedge g)=ds(u)\, g -\frac {s^2+(1-n)s\lambda }h\, g -du\otimes ds +\frac sh\, r.
$$
Hence, by (\ref{defndd}) together with  (\ref{str1}), we have
\bea
(n-2)\, \delta D&=&\frac {ns-(n-1)^2\lambda-\a}{(n-1)h}\, r-D_{\nabla u} r -\frac {r\circ r}h \\
& & +\frac {n-3}{2(n-1)}\,  du\otimes ds +\frac 1{n-1}\, du \otimes d\a\\
& &   +\frac 1{n-1}\left( ds(u)- \langle \nabla u, \nabla \a\rangle + \frac {s+(1-n)\lambda }{h}(\a -s) \right)g.
\eea
Since $D=\delta D=0$, the proof follows.
\end{proof}
Thus, we have the following.
\begin{corollary}  $(n-3)\, s+2\a$ is constant on each regular level set of $u$.\label{cor541}
\end{corollary}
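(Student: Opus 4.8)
The plan is to take the trace of the identity in Lemma~\ref{lem459} and then extract the component normal to the level set, combining this with the first-order information already recorded in Lemma~\ref{lem540}. Concretely, I would first contract the tensor equation of Lemma~\ref{lem459} with $g$. Using $\operatorname{tr} r = s$, $\operatorname{tr}(r\circ r) = |r|^2$, $\operatorname{tr}(D_{\nabla u} r) = \langle \nabla u, \nabla s\rangle = ds(u)$, and $\operatorname{tr}(du\otimes ds) = \operatorname{tr}(du\otimes d\a) = ds(u)$ or $\langle\nabla u,\nabla\a\rangle$ respectively, the trace collapses to a scalar relation among $s$, $\a$, $\lambda$, $h$, $|r|^2$, and the derivatives $ds(u)$, $d\a(u)$. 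This by itself still contains $|r|^2$, so the next step is to instead evaluate the Lemma~\ref{lem459} identity on the pair $(N,N)$ with $N = \nabla u/|\nabla u|$, using $r(N,N) = \a$ and $(r\circ r)(N,N) = \sum_i r(N,E_i)^2 = \a^2$ by (\ref{str1}); here $D_{\nabla u} r(N,N) = \tfrac12\nabla u(\a)$ plus a term that vanishes because $D_{\nabla u} N \perp N$, modulo the care needed to handle $\nabla_{\nabla u} N$.

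The cleaner route, which I expect to be the intended one, is to form the \emph{tangential} trace: sum the Lemma~\ref{lem459} identity over $E_i$, $1\le i\le n-1$, tangent to $L_c$. Since $du(E_i)=0$ on tangent vectors, every term carrying a factor $du\otimes(\cdot)$ drops out, $g$ contributes $n-1$, and $r$ contributes $\sum_{i<n} r(E_i,E_i) = s-\a$. What survives is an expression of the form (scalar coefficient)$\cdot(s-\a) - \sum_{i<n}(D_{\nabla u}r)(E_i,E_i) - \tfrac1h\sum_{i<n}(r\circ r)(E_i,E_i) + \tfrac{1}{n-1}(\text{something})\cdot(n-1) = 0$. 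One then recognizes, via (\ref{str1}) again, that $\sum_{i<n}(r\circ r)(E_i,E_i) = |r|^2 - \a^2$ and that $\sum_{i<n}(D_{\nabla u}r)(E_i,E_i) = ds(u) - \nabla u(\a)$ up to boundary terms from $D_{\nabla u}E_i$. Subtracting this tangential trace from the full trace eliminates $|r|^2$ and leaves a first-order ODE along the $\nabla u$-direction purely in $s$ and $\a$; I expect it to reduce exactly to $\nabla u\bigl((n-3)s + 2\a\bigr) = 0$, i.e. $(n-3)s+2\a$ has vanishing normal derivative. Combined with Lemma~\ref{lem540}, which already gives that $s$ (hence also $\a = \tfrac12\bigl[((n-3)s+2\a) - (n-3)s\bigr]$ once we know the claim) is controlled along level sets, and with the fact from Section~4 that $s + 2(1-n)\lambda$ is constant on each $L_c$, one concludes that $(n-3)s + 2\a$ is constant on each regular level set.

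The main obstacle will be bookkeeping the connection terms $D_{\nabla u}E_i$ and $D_{\nabla u}N$ correctly when splitting the trace into tangential and normal parts: the frame $\{E_i\}$ adapted to $L_c$ is not parallel along $\nabla u$, so $\sum_{i<n}(D_{\nabla u}r)(E_i,E_i)$ differs from $\nabla u\bigl(\sum_{i<n}r(E_i,E_i)\bigr)$ by terms involving the second fundamental form of $L_c$, and one must verify that these extra terms either cancel between the two traces or are absorbed using $i_{\nabla u}r = \a\,du$ and $|\nabla u|^2$ being constant on $L_c$ (Lemma~\ref{lem540}). A safe way to sidestep the frame issue entirely is to note that $\sum_{i<n}r(E_i,E_i) = s - r(N,N) = s-\a$ as a \emph{scalar function}, differentiate that directly along $\nabla u$, and only afterwards compare with the contracted Lemma~\ref{lem459}; this isolates the genuinely tensorial content and makes the cancellation of $|r|^2$ transparent. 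Once the normal derivative of $(n-3)s+2\a$ is shown to vanish and its level-set behavior is pinned down by Lemma~\ref{lem540}, the corollary follows immediately.
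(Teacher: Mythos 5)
There is a genuine gap here: your computation, even if every trace were carried out flawlessly, would prove the wrong statement. All three of the contractions you propose --- the full trace, the tangential trace over $E_1,\dots,E_{n-1}$, and the $(N,N)$-component --- produce scalar equations in which the only derivatives of $s$ and $\a$ that appear are the \emph{normal} ones, $ds(u)=\langle\nabla u,\nabla s\rangle$ and $\langle\nabla u,\nabla\a\rangle$ (these come from tracing $du\otimes ds$, $du\otimes d\a$, and $D_{\nabla u}r$). No tangential derivative $ds(X)$ or $d\a(X)$, $X\perp\nabla u$, can ever survive such a contraction. So the best you could hope to extract is $\nabla u\bigl((n-3)s+2\a\bigr)=0$, i.e.\ constancy along the integral curves of $\nabla u$, whereas the corollary asserts constancy \emph{on each level set}, i.e.\ vanishing of the tangential derivatives. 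Your attempt to bridge this by invoking Lemma~\ref{lem540} does not work: that lemma says $s+2(1-n)\lambda$ is constant on level sets, not $s$ itself (constancy of $s$ on level sets is only obtained later, in Theorem~\ref{thm304}, \emph{using} this corollary), and the parenthetical ``once we know the claim'' is circular.

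The paper's argument exploits precisely the feature your contractions throw away: the identity in Lemma~\ref{lem459} is \emph{not} symmetric in its two slots because of the terms $du\otimes ds$ and $du\otimes d\a$. Evaluating first on the ordered pair $(X,\nabla u)$ with $X\perp\nabla u$ kills those terms (since $du(X)=0$) along with the $r$, $r\circ r$, and $g$ terms (using $i_{\nabla u}r=\a\,du$), leaving $D_{\nabla u}r(X,\nabla u)=0$. Then evaluating on the reversed pair $(\nabla u,X)$ and using the symmetry $D_{\nabla u}r(\nabla u,X)=D_{\nabla u}r(X,\nabla u)=0$ leaves exactly
$$0=\frac{n-3}{2(n-1)}\,|\nabla u|^2\,ds(X)+\frac 1{n-1}\,|\nabla u|^2\,d\a(X),$$
which is the tangential statement $(n-3)\,ds(X)+2\,d\a(X)=0$. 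You need this asymmetric, directional evaluation; no trace will do it.
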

\begin{proof}
Let $X$ be a vector orthogonal to $\nabla u$. By putting $(X, \nabla u)$ in the equation in Lemma~\ref{lem459}, 
\be D_{\nabla u}r(X, \nabla u)=0.\label{eqn506}\ee
Now, by putting $(\nabla u, X)$ in the equation in Lemma~\ref{lem459} again, we have
$$ 0=\frac {n-3}{2(n-1)}\, |\nabla u|^2 \, ds(X)+ \frac 1{n-1}\, |\nabla u|^2 \, d\a (X),$$
since $r(X, \nabla u)=0$ and $$D_{\nabla u}r(\nabla u, X)=D_{\nabla u}r(X, \nabla u).$$
\end{proof}

\begin{lemma} $s_g+2(1-n)\a$ is constant on each regular level set of $u$. \label{lem542}
\end{lemma}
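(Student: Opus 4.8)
The plan is to show that $s_g+2(1-n)\alpha$ has vanishing derivative in every direction tangent to a regular level set $L_c=u^{-1}(c)$; such a function is then locally constant on $L_c$, hence constant on each component. Since $|\nabla u|^2$ is already known to be constant on $L_c$ (Lemma~\ref{lem540}) and $i_{\nabla u}r=\alpha\,du$ holds (Lemma~\ref{lem336}), the whole statement reduces to the single identity
\[
d\alpha(X)=\frac{1}{2(n-1)}\,ds(X)\qquad\text{for every }X\perp\nabla u .
\]
Indeed, granting this, $X\bigl(s_g+2(1-n)\alpha\bigr)=ds(X)+2(1-n)\cdot\frac{1}{2(n-1)}\,ds(X)=ds(X)-ds(X)=0$.

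To obtain that identity I would express $(D_Xr)(\nabla u,\nabla u)$ in two ways. First, since $r(\nabla u,\nabla u)=\alpha\,|\nabla u|^2$ and $|\nabla u|^2$ is constant on $L_c$, differentiating along $X\perp\nabla u$ gives $X\bigl(r(\nabla u,\nabla u)\bigr)=|\nabla u|^2\,X(\alpha)$, while the Leibniz rule gives $X\bigl(r(\nabla u,\nabla u)\bigr)=(D_Xr)(\nabla u,\nabla u)+2\,r(D_X\nabla u,\nabla u)$. The cross term vanishes: by $i_{\nabla u}r=\alpha\,du$ we have $r(D_X\nabla u,\nabla u)=\alpha\,du(D_X\nabla u)=\alpha\langle D_X\nabla u,\nabla u\rangle=\tfrac{\alpha}{2}X(|\nabla u|^2)=0$ on $L_c$. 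Hence $(D_Xr)(\nabla u,\nabla u)=|\nabla u|^2\,X(\alpha)$.

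Second, I would evaluate the Cotton tensor identity~(\ref{eqn31}), $d^Dr=C+\frac{1}{2(n-1)}\,ds\wedge g$, at the triple $(X,\nabla u,\nabla u)$:
\[
(D_Xr)(\nabla u,\nabla u)-(D_{\nabla u}r)(X,\nabla u)=C(X,\nabla u,\nabla u)+\frac{|\nabla u|^2}{2(n-1)}\,ds(X).
\]
Here $C(X,\nabla u,\nabla u)=0$ by~(\ref{eqnf22}) and $(D_{\nabla u}r)(X,\nabla u)=0$ by~(\ref{eqn506}), so $(D_Xr)(\nabla u,\nabla u)=\frac{|\nabla u|^2}{2(n-1)}\,ds(X)$. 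Comparing with the previous step and dividing by $|\nabla u|^2>0$ (legitimate since $c$ is a regular value) gives exactly $d\alpha(X)=\frac{1}{2(n-1)}\,ds(X)$ for $X\perp\nabla u$, which finishes the argument.

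The computation is entirely routine given Lemmas~\ref{lem540} and~\ref{lem336}, Corollary~\ref{cor33}, and~(\ref{eqn506}), and I do not expect a real obstacle. The one point needing care is the bookkeeping in $d^Dr(X,\nabla u,\nabla u)$ — keeping track of which slot of $r$ is differentiated, and using $(D_{\nabla u}r)(\nabla u,X)=(D_{\nabla u}r)(X,\nabla u)$ to invoke~(\ref{eqn506}) — together with verifying that the $r(D_X\nabla u,\nabla u)$ term really drops out on $L_c$. As a consistency check, combining the identity above with Corollary~\ref{cor541}, whose proof gives $\tfrac{n-3}{2}\,ds(X)+d\alpha(X)=0$ on $L_c$, forces $(n-2)^2\,ds(X)=0$; thus $s$, $\alpha$, and $\lambda$ are in fact each constant on every regular level set, which is presumably the next step of the paper.
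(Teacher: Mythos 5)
Your argument is correct and is essentially the paper's own proof: both evaluate the Cotton identity at $(X,\nabla u,\nabla u)$ using $C(X,\nabla u,\nabla u)=0$ and (\ref{eqn506}) to get $(D_Xr)(\nabla u,\nabla u)=\tfrac{|\nabla u|^2}{2(n-1)}ds(X)$, then compute $X(\alpha)$ by Leibniz with a vanishing cross term. The only (immaterial) difference is that you kill $r(D_X\nabla u,\nabla u)$ via $i_{\nabla u}r=\alpha\,du$ and the constancy of $|\nabla u|^2$ on $L_c$, whereas the paper uses the soliton equation to write it as $\tfrac1h\bigl(\lambda\,r(X,\nabla u)-r\circ r(X,\nabla u)\bigr)=0$.
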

\begin{proof} For $X$ orthogonal to $\nabla u$, by (\ref{eqnf22}) and (\ref{eqn506})
\bea
0&=& C(X, \nabla u, \nabla u)\\
&=& D_Xr(\nabla u, \nabla u)-\frac 1{2(n-1)}\, ds(X)\, |\nabla u|^2.
\eea
Thus, 
\bea
X(\a)&=& \frac 1{|\nabla u|^2}\, X\left(r(\nabla u, \nabla u)\right)\\
&=&  \frac 1{|\nabla u|^2}\, \left( D_Xr(\nabla u, \nabla u) +2\, r(D_Xdu, \nabla u)  \right)\\
&=& \frac 1{2(n-1)}\, ds(X),
\eea
since
$$ r(D_Xdu, \nabla u)= \frac 1h\, (\lambda\, r(X, \nabla u)-r\circ r(X, \nabla u))=0.$$
\end{proof}

By combining Lemma~\ref{lem540}, Corollary~\ref{cor541}, and Lemma~\ref{lem542}, we have the following.
\begin{theorem} Let $(M,g,\nabla u, h, \lambda)$ be a Bach-flat $h$-almost gradient Ricci soliton with potential function $u$. Assume that each level set of $u$ is compact and $h$ is a function of $u$ only with $\frac{dh}{du}>0$.
Then
$s_g$, $\a$, and $\lambda$ are constant on each regular level set of $u$.
In particular, if $h$ is constant, the condition on $\frac {dh}{du}$ is not necessary.
 \label{thm304}
\end{theorem}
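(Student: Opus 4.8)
The plan is to combine the three constancy statements established immediately above — Lemma~\ref{lem540}, Corollary~\ref{cor541}, and Lemma~\ref{lem542} — which together assert that on a fixed regular level set $L_c$ each of
$$ s + 2(1-n)\,\lambda, \qquad (n-3)\, s + 2\,\a, \qquad s + 2(1-n)\,\a $$
is constant. Viewing $(s,\a,\lambda)$ as functions restricted to $L_c$, these three relations say that three fixed linear combinations of $(s,\a,\lambda)$ are locally constant on $L_c$, so it suffices to observe that the corresponding $3\times 3$ coefficient matrix is invertible; then $s$, $\a$, and $\lambda$ are individually constant on $L_c$.

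Concretely, I would eliminate $\a$ between Corollary~\ref{cor541} and Lemma~\ref{lem542}: solving $(n-3)\,s + 2\,\a = \mathrm{const}$ for $\a$ and substituting into $s + 2(1-n)\,\a = \mathrm{const}$ yields $\bigl(1 - (1-n)(n-3)\bigr)\, s = \mathrm{const}$ on $L_c$. Since
$$ 1 - (1-n)(n-3) = (n-2)^2 \neq 0 $$
for $n \geq 4$ — the range in which the Bach tensor is defined — this forces $s_g$ to be constant on $L_c$. Feeding this back into Corollary~\ref{cor541} gives that $\a$ is constant on $L_c$, and then Lemma~\ref{lem540} gives that $\lambda$ is constant on $L_c$. (Constancy of $|\nabla u|^2$, the other half of Lemma~\ref{lem540}, is not needed for this step, though it enters the proofs of the lemmas being combined; equivalently, one may simply note that the determinant of the coefficient matrix equals $4(n-1)(n-2)^2$.) There is essentially no obstacle in this last step — all the genuine work is in Sections~3 and~4 — beyond verifying the single nonvanishing $(n-2)^2 \neq 0$.

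For the final assertion I would track where the hypothesis $\frac{dh}{du}>0$ was actually used. It enters only (i) through the integral identity preceding Lemma~\ref{lem22}, from which $D = H = 0$ is deduced, and (ii) through the exclusion of the values $-\frac{n-3}{n-2}\,h^2$ and $-\frac1{n-2}\,h^2$ for $\frac{dh}{du}$ needed in Corollary~\ref{cor33} and Lemma~\ref{lem336}. When $h$ is a constant — necessarily nonzero, since $h$ is a signal function — one has $H \equiv 0$ identically by the Proposition in Section~2 noting that $H\equiv 0$ when $h$ is constant, so that integral identity collapses to $\frac{n-2}{2}\int \frac{|D|^2}{h} = 0$ and still yields $D = 0$; and since then $\frac{dh}{du} = 0$ while $-\frac{n-3}{n-2}\,h^2$ and $-\frac1{n-2}\,h^2$ are strictly negative (again using $n \geq 4$), none of Corollary~\ref{cor33}, Lemma~\ref{lem336}, Lemma~\ref{lem540}, Corollary~\ref{cor541}, or Lemma~\ref{lem542} is obstructed. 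Hence the same conclusion holds with $\frac{dh}{du}>0$ replaced by "$h$ constant," and the only care required here is organizational rather than computational.
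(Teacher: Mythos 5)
Your proposal is correct and follows essentially the same route as the paper, which simply states that Theorem~\ref{thm304} follows ``by combining'' Lemma~\ref{lem540}, Corollary~\ref{cor541}, and Lemma~\ref{lem542}; your explicit elimination, with the determinant check $1-(1-n)(n-3)=(n-2)^2\neq 0$ for $n\geq 4$, is exactly the linear algebra the paper leaves implicit. Your tracking of the hypotheses in the $h$-constant case (using $H\equiv 0$ and the strict negativity of $-\tfrac{n-3}{n-2}h^2$ and $-\tfrac{1}{n-2}h^2$) is likewise a correct filling-in of the unproved final assertion.
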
 

When $D=0$, the Ricci tensor has the following characterization.

\begin{lemma}\label{lem1-1} Suppose that $D=0$.  Then
the Ricci curvature tensor has at most two eigenvalues.
\end{lemma}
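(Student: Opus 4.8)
The plan is to analyze the quantity $D$ defined in (\ref{defndd}) as an algebraic obstruction to having few Ricci eigenvalues. Recall that $(n-2)\,D = du\wedge r + \frac{1}{n-1} i_{\nabla u}r\wedge g - \frac{s}{n-1}du\wedge g$, so the hypothesis $D=0$ says
\[
du\wedge r = \frac{s}{n-1}\,du\wedge g - \frac{1}{n-1}\,i_{\nabla u}r\wedge g.
\]
First I would work at a point $p$ where $\nabla u(p)\neq 0$ (the conclusion is trivial where $\nabla u$ vanishes, by continuity of eigenvalues, or one argues on the open dense set of regular points). Write $N=\nabla u/|\nabla u|$. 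Evaluating the displayed identity on a triple $(X,Y,N)$ with $X,Y\perp N$ gives $|\nabla u|\,r(Y,X)$ on the left from the $du(N)$ term, and on the right the $g$-terms force $r(X,Y)$ restricted to $N^\perp$ to be a multiple of $g$: concretely one obtains
\[
r(X,Y) = \Big(\frac{s}{n-1} - \frac{1}{n-1}\,r(N,N)\Big)\,\langle X,Y\rangle
\]
for all $X,Y$ orthogonal to $\nabla u$.

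The second step is to control the mixed terms $r(X,N)$ for $X\perp N$. Evaluating the same wedge identity on $(X,N,Y)$ with $X,Y\perp N$ (so that the $du\wedge r$ term contributes $du(X)r(N,Y) - du(N)r(X,Y) = -|\nabla u|\,r(X,Y)$, already handled) versus triples that isolate $r(X,N)$: picking $(N,X,Y)$ one sees the coefficient of $du(N)$ multiplies $r(X,Y)$, while choosing two vectors from $N^\perp$ and one equal to $N$ in the slot where $du$ does not act shows that $i_{\nabla u}r\wedge g$ applied appropriately forces $r(X,N)=0$ whenever $X\perp\nabla u$. Thus in the orthonormal frame $\{E_i\}$ with $E_n=N$, the Ricci tensor is block diagonal: $r(E_i,E_n)=0$ for $i<n$, $r(E_i,E_j)=\mu\,\delta_{ij}$ on the $N^\perp$ block with $\mu = \frac{1}{n-1}(s - r(N,N))$, and $r(E_n,E_n)=\alpha$. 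Hence $r$ has the single eigenvalue $\mu$ on the $(n-1)$-dimensional distribution $N^\perp$ and the (possibly different) eigenvalue $\alpha$ in the direction $N$, giving at most two eigenvalues.

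The main obstacle is purely bookkeeping: extracting from the single wedge equation $du\wedge r = \cdots$ both the "scalar on $N^\perp$" statement and the "mixed terms vanish" statement by feeding in the right triples of vectors, being careful that $du\wedge\omega$ and $i_{\nabla u}r\wedge g$ are antisymmetric only in their first two arguments. I expect no conceptual difficulty, but one must check that the coefficient $\mu=\frac{1}{n-1}(s-r(N,N))$ is genuinely forced and that the degenerate case $\nabla u(p)=0$ is covered — there one can either invoke that $\{p: \nabla u(p)\neq 0\}$ is dense (else $u$ is locally constant and (\ref{basic}) makes $r$ a multiple of $g$, i.e. one eigenvalue) or appeal to continuity of the set of eigenvalues of the symmetric operator $r$. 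A clean way to present the argument is to fix a regular point, choose the adapted frame, and simply read off the components of $r$ from $D=0$ component-wise; this avoids wedge-product gymnastics entirely and makes the two-eigenvalue conclusion immediate.
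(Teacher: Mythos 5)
Your argument is correct, but it takes a genuinely different route from the paper's. The paper proves the lemma by passing through the second fundamental form of the level sets: using $II_{ij}=\frac{1}{h|\nabla u|}(\lambda g_{ij}-r_{ij})$ it shows $|D|^2=\frac{2}{n-2}h^2|\nabla u|^4\,|II-\frac{m}{n-1}g|^2$ (a norm identity that invokes $i_{\nabla u}r=\alpha\,du$ from Lemma~\ref{lem336}), concludes that the level sets are totally umbilic, and then reads off (\ref{subric}). You instead treat $D=0$ as a purely algebraic condition and evaluate (\ref{defndd}) component-wise in the adapted frame; this works, and is in fact more self-contained: the triple $(N,X,N)$ with $X\perp N$ gives
\begin{equation*}
(n-2)\,D(N,X,N)=|\nabla u|\,r(X,N)-\tfrac{1}{n-1}\,r(\nabla u,X)=\tfrac{n-2}{n-1}\,|\nabla u|\,r(X,N),
\end{equation*}
so $D=0$ alone forces $r(X,N)=0$ without the Cotton-tensor argument or the non-degeneracy condition $\frac{dh}{du}\neq-\frac{1}{n-2}h^2$ of Lemma~\ref{lem336}, and the triple $(N,X,Y)$ with $X,Y\perp N$ gives $r(X,Y)=\frac{s-\alpha}{n-1}g(X,Y)$, exactly (\ref{subric}). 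What the paper's route buys is the umbilicity statement (\ref{mean1208}) as an explicit byproduct, which is reused in the proof of Theorem~\ref{thm901}; in your approach that statement still follows immediately from (\ref{subric}) and (\ref{ast1}), so nothing downstream is lost. One presentational caveat: the triples as literally written in your first paragraph, $(X,Y,N)$ with both $X,Y\perp N$, annihilate every term of $D$ and yield $0=0$; the informative slots are the ones with $N$ in the first argument, as above. Since you end by proposing exactly the clean component-wise frame computation, this is a bookkeeping slip rather than a gap. Your handling of points where $\nabla u=0$ is also fine (and slightly more careful than the paper, which only uses the lemma on the open set where $\nabla u\neq 0$).
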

\begin{proof}
Let $\{E_i\}$, $1\leq i\leq n$, be an orthonoromal frame with  $E_n=N=\nabla u/|\nabla u|$. Then 
\be II_{ij}= \frac 1{h\, |\nabla u|}\left( \lambda g_{ij}-r_{ij}\right),\label{ast1}\ee
and
$$ m=\mbox{\rm tr}\, II = \frac {n-1}{h\, |\nabla u|}\left( \lambda + \frac {\a-s}{n-1}\right).$$
Thus, $m$ is constant on each level set of $u$, and
\bea
|II- \frac m{n-1} \,g|^2&=& |II|^2- \frac {m^2}{n-1}\\
&=& \frac 1{h^2|\nabla u|^2}\, \left( |r|^2-\a^2 -\frac {(s-\a)^2}{n-1}\right)\\
&=&\frac 1{h^2|\nabla u|^2}\, \left( |r|^2-\frac n{n-1}\, \a^2+\frac {2s\a}{n-1} -\frac {s^2}{n-1}\right).
\eea
Since $r\circ r(\nabla u, \nabla u)=\a^2 |\nabla u|^2$, from the identity 
$$ \frac {n-2}2\, |D|^2= |r|^2|\nabla u|^2-\frac n{n-1}\, r\circ r(\nabla u, \nabla u) +\frac {2s}{n-1}\, r(\nabla u, \nabla u)-\frac {s^2}{n-1}|\nabla u|^2,$$
we have
$$ |D|^2=\frac 2{n-2}\, h^2\, |\nabla u|^4\, |II-\frac m{n-1}\, g |^2.
$$
Since $D=0$, 
we have 
\be II_{ij}=\frac m{n-1}g_{ij},\label{mean1208}\ee
which implies that 
\be r_{ij}= \frac {s-\a}{n-1}\, g_{ij}\label{subric}\ee
for $i=1,...,n-1$ by (\ref{ast1}). 
This completes the proof of our lemma.
\end{proof}
As an immediate consequence, on an open set  $\{x\in M\, \vert\, \nabla u  (x)\neq 0\}$, 
the Ricci tensor may be written as
$$ r_g=\beta \, du\otimes du + \left( \frac {s-\a}{n-1}\right)\, g,
$$
where 
$$\beta = \frac {n\,\a -s}{(n-1)|\nabla u|^2}.$$
Thus, by (\ref{basic}) we have
$$ D_gdu = \frac 1h\left( \lambda +\frac {\a-s}{n-1}\right)\, g -\frac {\beta}h\, du \otimes du.
$$
\vskip .5pc
Now, we are ready to prove Corollary~\ref{thm302}, which shows the relationship between Bach-flat metrics and harmonic Weyl metrics.
\vskip 1.0pc\noindent
{\it Proof of Corollary~\ref{thm302}}.\, 
Note that, by (\ref{eqn32}) and (\ref{eqnf22})
$$ C(\cdot, \cdot, \nabla u)=C(\cdot,  \nabla u, \cdot)=0.$$
On the other hand, by the Codazzi equation,
$$ \langle R(X,Y)Z, N\rangle=D_YII(X,Z)-D_XII(Y,Z).$$
Thus, for $1\leq i,j,k\leq n-1$,  by (\ref{mean1208})
\bea \langle R(E_i, E_j)E_k, N\rangle &=& E_j(II(E_i, E_k))-II(D_{E_j}E_i, E_k)-II(E_i, D_{E_j}E_k)\\
& -&E_i(II(E_j, E_k))+II(D_{E_i}E_j, E_k)+II(E_j, D_{E_i}E_k)=0.
\eea
Therefore, by (\ref{dr1}) 
$$ d^Dr(E_i, E_j, E_k)=0,$$
which implies that 
$$ C(E_i, E_j, E_k)=d^Dr(E_i, E_j,E_k)-\frac 1{2(n-1)}\, ds\wedge g(E_i, E_j, E_k)=0.$$
Hence, $C$ is identically zero, and so is 
$ \delta {\mathcal W}$.
\hfill $\Box$
\vskip 1.0pc 

The following is a restatement of Theorem~\ref{thm301}.

\begin{theorem}
Let $(M,g, \nabla u, h, \lambda)$ be a Bach-flat $h$-almost gradient Ricci soliton with potential function $u$.
Assume that each level set of $u$ is compact with $\frac {dh}{du}>0$ on $M$. Then, either  $g$ is  Einstein with constant function $u$ or  the metric can be written as
$$ g=dt^2+\psi^2(t)\,  \hat{g}_E,$$
where $\hat{g}_E $ is the Einstein metric on the level set $E=L_{c_0}$ for some $c_0$. \label{thm901}
\end{theorem}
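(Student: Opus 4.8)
The plan is to separate into the cases $\nabla u\equiv 0$ and $\nabla u\not\equiv 0$, treating the latter by erecting geodesic coordinates adapted to $\nabla u$ around a regular level set. First I would assemble the structural information already obtained: since $\frac{dh}{du}>0$, Lemma~\ref{lem22} gives $D=H=0$, so Corollary~\ref{cor33}, Lemma~\ref{lem336}, Lemma~\ref{lem1-1} and Theorem~\ref{thm304} all apply. Thus $r(X,\nabla u)=0$ for $X\perp\nabla u$ and $i_{\nabla u}r=\alpha\,du$; on $\{\nabla u\neq0\}$ the level sets of $u$ are totally umbilical with $II_{ij}=\frac{m}{n-1}g_{ij}$ by (\ref{mean1208}) and $r_{ij}=\frac{s-\alpha}{n-1}g_{ij}$ for $i,j\le n-1$ by (\ref{subric}); and $|\nabla u|^{2}$, $s$, $\alpha$, $\lambda$, $h$, hence also $m=\frac{n-1}{h|\nabla u|}\big(\lambda+\frac{\alpha-s}{n-1}\big)$, are constant on each regular level set of $u$. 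If $\nabla u\equiv0$ on $M$, then $D_{g}du\equiv0$, so (\ref{basic}) reads $r_{g}=\lambda g$; since $n\ge4$, Schur's lemma forces $\lambda$ to be constant, $u$ is constant, $h=h(u)$ is constant, and we are in the first alternative.

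Now assume $\nabla u\not\equiv0$ and fix a regular value $c_{0}$ of $u$ with compact level set $E=L_{c_{0}}$. Near $E$ (and in fact on the whole connected component $V$ of $\{\nabla u\neq0\}$ containing $E$, where every level set is regular) the function $|\nabla u|$ is a positive smooth function of $u$ alone, say $|\nabla u|=\rho(u)$, so that $t:=\int_{c_{0}}^{u}\rho(\tau)^{-1}\,d\tau$ satisfies $|\nabla t|\equiv1$ and $E=\{t=0\}$; hence $N:=\nabla t=\nabla u/|\nabla u|$ is a unit vector field with $D_{N}N=0$, because $\langle D_{N}N,X\rangle=Ddt(N,X)=\tfrac12 X|\nabla t|^{2}=0$ for every $X$. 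Thus the integral curves of $N$ are unit-speed geodesics meeting the level sets orthogonally, and flowing $E$ along $N$ identifies $V$ with a product $I\times E$ in which $g=dt^{2}+g_{t}$ for a family $g_{t}$ of metrics on $E$ with $u$ depending on $t$ alone; here compactness of the level sets and the absence of critical points of $u$ in $V$ are what make the flow a diffeomorphism onto all of $V$. Since $\partial_{t}g_{t}=2\,II_{t}=\frac{2m}{n-1}\,g_{t}$ and $\frac{m}{n-1}=\frac{1}{h\rho}\big(\lambda+\frac{\alpha-s}{n-1}\big)$ is a function of $t$ only, integrating gives $g_{t}=\psi(t)^{2}\hat{g}_E$ with $\hat{g}_E:=g_{0}$ and $\psi>0$ the solution of $\psi'/\psi=\frac{m}{n-1}$, $\psi(0)=1$. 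Hence $g=dt^{2}+\psi(t)^{2}\hat{g}_E$ on $V$.

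It remains to see that $\hat{g}_E$ is Einstein. Inserting the warped-product form into the standard formula for the Ricci curvature of $dt^{2}+\psi^{2}\hat{g}_E$ along fiber directions $X,Y$ gives
$$r_{g}(X,Y)=r_{\hat{g}_E}(X,Y)-\Big(\tfrac{\psi''}{\psi}+(n-2)\tfrac{(\psi')^{2}}{\psi^{2}}\Big)\psi^{2}\,\hat{g}_E(X,Y),$$
whereas (\ref{subric}) gives $r_{g}(X,Y)=\tfrac{s-\alpha}{n-1}g(X,Y)=\tfrac{s-\alpha}{n-1}\psi^{2}\hat{g}_E(X,Y)$. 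Subtracting, $r_{\hat{g}_E}(X,Y)=F(t)\,\hat{g}_E(X,Y)$ with $F(t)=\big(\tfrac{s-\alpha}{n-1}+\tfrac{\psi''}{\psi}+(n-2)\tfrac{(\psi')^{2}}{\psi^{2}}\big)\psi^{2}$, a function of $t$ only since $s$, $\alpha$ and $\psi$ are; as the left-hand side and $\hat{g}_E$ are independent of $t$, the scalar $F$ must be constant, so $r_{\hat{g}_E}=F\,\hat{g}_E$ and $\hat{g}_E$ is Einstein, which finishes the proof.

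The step I expect to require the most care is this warped-product reduction: verifying that flowing a regular level set along $N$ genuinely sweeps out an entire connected component of $\{\nabla u\neq0\}$ as a metric product $I\times E$ (which is exactly where compactness of the level sets enters), that each quantity constant on level sets descends to a smooth function of the single variable $t$, and that the warped-product Ricci identity is applied with the correct signs. Granting these points, the whole argument is a direct consequence of Lemma~\ref{lem22}, Lemma~\ref{lem1-1}, and Theorem~\ref{thm304}.
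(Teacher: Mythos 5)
Your proof is correct, and its skeleton --- use $D=H=0$ from Lemma~\ref{lem22} to get totally umbilical level sets with $II=\frac{m}{n-1}g$ and $s$, $\a$, $\lambda$, $|\nabla u|$ constant on each level set, then exhibit the metric as a warped product over an interval --- is the same as the paper's. Where you genuinely diverge is in proving that the fiber is Einstein. The paper applies the Gauss equation and then computes the radial curvature $R(E_i,N,E_i,N)=\a/(n-1)$ explicitly from the curvature decomposition together with ${\mathcal W}(\cdot,\nabla u,\cdot,\nabla u)=0$ (Corollary~\ref{cor33}), arriving at $\hat r_{ii}=\frac 1{n-1}(s-2\a+m^2)$. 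You instead first integrate the umbilicity equation $\partial_t g_t=\frac{2m}{n-1}g_t$ to obtain the warped form $dt^2+\psi^2\hat g_E$, and only then read off $r_{\hat g_E}=F(t)\,\hat g_E$ from the standard warped-product Ricci identity combined with (\ref{subric}), concluding that $F$ is constant because the left-hand side is $t$-independent. Your route is arguably cleaner at this step: it does not invoke the Weyl-tensor vanishing of Corollary~\ref{cor33}, and it controls the full fiber Ricci tensor (off-diagonal entries included) in one stroke, whereas the paper's Gauss-equation computation is written only for diagonal entries; what the paper's computation buys in exchange is an explicit value of the fiber's Einstein constant. Both arguments rest on the same point that is not proved in full detail, namely that flowing a compact regular level set along $N$ realizes a neighborhood as a metric product $I\times E$ with all level-set-constant quantities descending to functions of $t$ (the paper outsources this to Theorem 7.9 of \cite{hpw} and Remark 3.2 of \cite{bet2}, while you sketch it directly), and your explicit treatment of the $\nabla u\equiv 0$ case via Schur's lemma is a small completeness gain over the paper's ``assume $u$ is not constant.''
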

\begin{proof} Assume that $u$ is not constant.
By Lemma~\ref{lem22}, $D=H=0$.
Since $|\nabla u|^2$ depends only on $u$ by Lemma~\ref{lem1-1},
as shown in the proof of Theorem 7.9 of \cite{hpw} with Remark 3.2 of \cite{bet2}, 
the metric can be locally written as 
$$ g=dt^2+ \hat{g}_{c}.$$ 
Here, $\hat{g}_c$ denotes the induced metric on the level set $L_c=u^{-1}(c)$ for each regular value $c$. Furthermore, $(L_c, \hat{g}_c)$ is necessarily Einstein;
by the Gauss equation
$$ \hat{R}_{ijij}=R_{ijij}+II_{ii}II_{jj}-II_{ij}^2 = R_{ijij}+\frac {m^2}{(n-1)^2}.$$
Thus, 
$$ \hat{r}_{ii}= r_{ii} -R(N, E_i, N, E_i)+\frac {m^2}{n-1}.$$
By (\ref{eqnf22}) and (\ref{subric}), we have
$$ R(E_i, N, E_i, N)=  \frac 1{n-2}\, (r_{ii}+\a)-\frac s{(n-1)(n-2)}=\frac {\a}{n-1}.
$$
Hence, it follows that
$$ \hat{r}_{ii}= r_{ii}+\frac {m^2-\a}{n-1}=\frac 1{n-1}\, (s-2\alpha +m^2)=\hat{\lambda}_0.
$$
Since $s$, $\alpha$, and  $m$ are constant along $L_c$, this proves that  $(L_c, \hat{g}_c)$ has constant Ricci curvature. 
As a result, by suitable change of variable, the metric $g$ can be written as in the statement of Theorem~\ref{thm901}.
\end{proof}
\vskip 1.2pc\noindent
{\bf  Acknowledgment}
The authors would like to express their gratitude to the referee for several valuable comments.

\end{document}